\documentclass[final]{siamltex}
\newtheorem{example}{Example}[section]

\newcommand{\R}{\rm I\kern-.19emR}
\newcommand{\bQ}{\rm I\kern-.19emQ}
\newcommand{\C}{\rm I\kern-.17emC}
\newcommand{\bR}  {\R}
\newcommand{\bRn}  {\R^{n,n}}

\newcommand{\beqo} {\begin {eqnarray*}}
\newcommand{\beq} {\begin {eqnarray}}
\newcommand{\eeqo} {\end {eqnarray*}}
\newcommand{\eeq} {\end {eqnarray}}

\newcommand{\bdefi} {\begin {definition}}
\newcommand{\edefi} {\end {definition}}
\newcommand{\bpro} {\begin {proposition}}
\newcommand{\epro} {\end {proposition}}
\newcommand{\btheo} {\begin {theorem}}
\newcommand{\etheo} {\end {theorem}}
\newcommand{\blem} {\begin {lemma}}
\newcommand{\elem} {\end {lemma}}
\newcommand{\bcor} {\begin {corollary}}
\newcommand{\ecor} {\end {corollary}}

\title {Hidden decay in the inverses of acyclic matrices}

\author{Reinhard Nabben\thanks{
     	TU Berlin 
        Germany, email: nabben@math.tu-berlin.de, date of this version: 27.06.2026}}

\begin{document}

\maketitle

\begin{abstract}
It is well-known that  the absolute values of the entries of the inverse  of a diagonally dominant tridiagonal matrix  decay away from the diagonal. Moreover, lower and upper bounds for these  absolute values are known for a long time.  The graph of a tridiagonal matrix can be viewed as a  line. Thus  tridiagonal matrices are special acyclic matrices, i.e. matrices whose graph is a tree. 
In this paper we explore the structure of inverses of acyclic matrices.
However, there is no strict decay from the diagonal in the  inverses of diagonally dominant acyclic matrices. The decay is somehow hidden.  Here we identify this  decay which can be described nicely.
Moreover  we give bounds  for the absolute value of the entries  of the inverses of acyclic matrices. All of our results generalize results for tridiagonal matrices. 
\end{abstract}

\begin{keywords} acyclic matrices, trees, Green's matrices, tridiagonal matrices, decay
\end{keywords}

\begin{AMS}
15A48, 15A57, 65F10
\end{AMS}

\pagestyle{myheadings}
\thispagestyle{plain}

\markboth{R. NABBEN}{Hidden decay in the inverses of acyclic matrices } 

\section{Introduction}


An $n \times n$ matrix is called {\it sparse} if the number of its nonzero entries is much smaller than $n^2$. Sparsity reduces the storage requirements and the complexity of matrix-vector multiplications. Hence,  sparsity is useful
in almost all branches of numerical analysis. An 
$n \times n$ matrix is called {\it data sparse} if it can be represented by much less than  $n^2$ parameters with respect to a certain format or structure. Nevertheless, this format or structure is often hidden and must be identified. Thus, exploiting  hidden structure in matrix computation  
is a key topic in numerical analysis right now \cite{BenS16}.

The most prominent class of  data sparse matrices is the class of inverses of non-singular tridiagonal matrices. Non-singular tridiagonal $n \times n$ matrices can be described by $3n-2$ parameters. The inverses are dense but  can also be described by  $3n-2$ parameters. Gantmacher and Krein \cite{GK1} and \cite{GK2}
proved that 
a symmetric, irreducible non-singular matrix $A$ is tridiagonal 
if and only if $A^{-1} =: C =  [c_{i,j}]$
is given by two sequences 
$\{u_i\}_{i=1}^{n}$, $\{v_i\}_{i=1}^{n}$ of numbers such that
\beq \label{uv1}
C = \left[ \begin{array}{cccc}
u_1v_1 & u_1v_2 & \cdots & u_1v_n  \\
u_1v_2 & u_2v_2& \cdots & u_2v_n \\
 \vdots & \vdots & \ddots & \vdots \\
u_1v_n & u_2v_n & \cdots & u_nv_n
\end{array} \right] \quad \mbox{i.e.} \quad 
c_{i,j} = \left\{ \begin{array}{lr}

u_iv_j & i \leq j,\\
u_jv_i & i \geq j.
\end{array} \right. 
\eeq

Matrices of the form (\ref{uv1}) are called Green's matrices by Gantmacher and Krein \cite{GK2}. It was observed in \cite{McDNNST9} that 
Green's matrices can be described more elegantly 
 as the Hadamard
product (element-wise product) of a so-called type D matrix and a flipped type D matrix: 

\beq \label{inva11}
C = \left[ \begin{array}{cccc}
u_1 & u_1 & \cdots & u_1  \\
u_1 & u_2 & \cdots & u_2 \\
 \vdots & \vdots & \ddots & \vdots \\
u_1 & u_2 & \cdots & u_n
\end{array} \right] \circ
\left[ \begin{array}{cccc}
v_1 & v_2 & \cdots & v_n  \\
v_2 & v_2 & \cdots & v_n \\
 \vdots & \vdots & \ddots & \vdots \\
v_n & v_n & \cdots & v_n
\end{array} \right].
\eeq

A similar result holds for the inverse of a non-symmetric irreducible 
tridiagonal matrix. 
There the inverse $C = A^{-1} $ can be described by four   sequences $\{u_i\},\{v_i\},\{x_i\},\{y_i\}$ which satisfy $u_iv_i = x_iy_i$ (\cite{I}). In detail we have  
\beq \label{uvxy11}
c_{i,j} = \left\{ \begin{array}{lr}
u_iv_j & i \leq j,\\
x_iy_j & i \geq j.
\end{array} \right.
\eeq

As in the symmetric case matrices of the form (\ref{uvxy11}) can be written nicely as 
the Hadamard product of two matrices:
\beq \label{uninva1}
C = \left[ \begin{array}{ccccc}
u_1 & u_1 & \cdots &\cdots & u_1  \\
x_1 & u_2 & \cdots &\cdots & u_2 \\
x_1 & x_2 & u_3 & \cdots & u_3 \\
 \vdots & \vdots & & \ddots & \vdots \\
x_1 & x_2 & \cdots & \cdots & u_n
\end{array} \right] \circ
\left[ \begin{array}{ccccc}
v_1 & v_2 & \cdots & \cdots & v_n  \\
y_2 & v_2 & \cdots & \cdots & v_n \\
y_3 & y_3 & v_3 & \cdots & v_n \\
 \vdots & \vdots & & \ddots & \vdots \\
y_n & y_n & \cdots & \cdots & v_n
\end{array} \right].
\eeq
Thus, the inverses of non-singular tridiagonal matrices are data sparse. For a recent book on tridiagonal matrices  see \cite{Meu25}. 
A frequently asked question about non-singular sparse matrices is if 
there exists a similar structure for their inverses as in the tridiagona, or in other terms, what are generalizations of Green's matrices for arbitrary sparse matrices? 
In general, the answer to this  question is likely to be negative. 

Starting with tridiagonal matrices and their inverses more classes of data sparse matrices were identified. Among them are semi-separable matrices \cite{Van05, Van08} but as we will  see later inverses of acyclic matrices are also data sparse. 

Another  nice property  of non-singular strictly diagonally dominant tridiagonal matrices is that the entries of the inverse decay away  from the diagonal (in absolute values)\cite{Nab99b}.
If $A$ is tridiagonal and strictly  diagonally dominant by rows, then  the sequence of the $\{|u_i|\}$ is strictly
increasing while  the sequence of the $\{|y_i|\}$ is strictly decreasing. However,
if $A$ is tridiagonal and strictly  diagonally dominant by columns, then  the sequence of the $\{|x_i|\}$ is strictly
increasing while the sequence  of the $\{|v_i|\}$ is strictly decreasing  (Theorem 3.2 in \cite{Nab99b}). 

Matrices that are characterized by off-diagonal decay, or more generally ``localization'' of their {entries}, appear in applications throughout the mathematical and computational sciences. The presence of such localization can lead to  computational savings, since it allows to (closely) approximate a given matrix by using its significant entries only, and discarding the negligible ones according to a pre-established criterion. In this context, it is then of great practical interest to know a priori how many and which of these entries can be discarded as insignificant. Therefore, many authors have studied decay rates for different matrix classes and functions of matrices \cite{EchLN18}; see, e.g.,~\cite{BenBoi14,BenGol99,BenRaz07,BenSim15,CanSimVer14,DemMosSmi84,KriStrWer15,PelPol01}. For an excellent survey of the current state-of-the-art we refer to~\cite{Ben16}.

Here we consider irreducible non-singular acyclic matrices, i.e. matrices whose undirected graph is a tree. Note  that the graph  of a tridiagonal matrix is simply a line. Hence, the class of acyclic matrices includes  the class  of tridiagonal matrices. 

Acyclic (or treediagonal) matrices are studied in the landmarked paper by Klein \cite{Kle82}. He showed that the inverse satisfies the so-called triangle property and thus can also be described by just a few parameters. In \cite{Nab01} the author  gives   another equivalent formulation   of the inverses of these matrices. 
The beauty of this result is that it accurately describes the structure of inverses of acyclic matrices in a way that it is  done  for  inverses of tridiagonal matrices. It is proven  
that the inverses of irreducible acyclic symmetric matrices are given 
as the Hadamard product of three matrices, a type D matrix, a flipped type
D matrix (as in (\ref{inva11})) and a matrix of tree structure which is closely related 
to the graph of $A$ itself. A similar result holds for non-symmetric matrices.

Acylic matrices are  studied by many authors. However, often eigenvalue problems or   combinatorial properties of these matrices are in the main focus, see e.g. \cite{ Bru86, DemG93, Fied75, Kim09}, but in \cite{Bri04}  Moore--Penrose inverses of acyclic matrices are considered. Recently in \cite{PraS26} acyclic (treediagonal) matrices are characterized whose inverses are Z-matrices.

In this paper,    we consider a possible decay in the inverses of strictly diagonally dominant irreducible acyclic matrices. Following e.g.\cite{BenRaz07}, an $ n \times n$ matrix $C = [c_{i,j}]$ has the exponential off-diagonal decay property if there is a constant $k > 0$ such that
\beq \label{eq:ex}
 |c_{i,j}| \leq k \lambda^{|i-j|},  \quad \mbox{where}  \quad 0 < \lambda < 1. 
\eeq
Note that here the bounds for  the entries exponential decay if   $|i-j|$ gets larger. 
In this paper  we show, that similar to the inverses of diagonally dominant tridiagonal matrices,  the entries of inverses of acyclic matrices decay, not only the bounds.  
But this decay is somehow hidden and cannot visible immediately. Here we identify this  decay. It is not a 
decay strictly  away from the diagonal. The entries of the inverse  decay  away from the diagonal but  following the path from  a diagonal entry $c_{ii}$ to an $c_{ij}$. 

Moreover  we give lower and upper  bounds for the absolute values of the entries of the inverses of non-singular acyclic strictly diagonally dominant matrices. With  these bounds we can also establish an exponential decay as in (\ref{eq:ex}). The exponent is then $dist(i,j)$, the length of the unique path between  the vertices $i$ and $j$ in the graph of $A$. Exponential decay as in (\ref{eq:ex}) using $dist(i,j)$ is already considered in \cite{BenRaz07} for inverses of sparse diagonalizable and banded matrices.  

Our decay results as well as  our bounds generalize the known result  for inverses of tridiagonal matrices.  Indeed  for tridiagonal matrices our bounds are the same as those given e.g. in \cite{Nab99a, Nab99b}.

\section{Notations and preliminary results}

First   we need some notations and definitions. Here  we follow \cite{Ha72}. A {\it  weighted  graph
G = (V,E)} of $n+1$ vertices is a graph with the vertex set $V = \{0, \ldots, n\}$, 
edges $e_{ij} \in E$ between the 
vertices ($i,j \in \{0,1,\ldots,n\}$) labeled by nonzero weights $w_{i,j} \in \bR$.


A {\it path} 
from vertex $i$ to vertex $j$, denoted by $P_{i,j}$, is the shortest ordered sequence of edges 
\[
P_{i,j} : \left( (k_1,k_2), (k_2, k_3), \ldots (k_{r-1},k_r)\right) \]
where $k_1 = i$ and
$k_r = j$. The path $P_{i,j}$ is a cycle , if $k_1 = i = j = k_r$, $r \geq 3$ and $k_1, \ldots, k_{r-1}$ are distinct. 
A graph is acyclic if it has no cycles. A {\it tree} is a connected acyclic 
graph. Equivalently, a tree is a graph for which there exists a
unique path between  any two vertices $i$ and $j$. 
Here   we consider trees. For the unique  $P_{i,j}$ we also need
\[
\tilde P_{i,j} := \{k_1, k_2, \ldots, k_{r-1}\}.
\]
$\tilde P_{i,j}$ is just  the set of vertices of the path, except the last one. The pathlenght or distance between   the two vertices $i$ and $j$ is the number of edges in  $P_{i,j}$ 
\[
 |\tilde P_{i,j}| = dist(i,j) = r-1.
\]


We  say for short $l$ follows the path  between vertices $i$ and $j$ in a tree, which means we consider the unique sequence of ordered vertices
\beqo
(i,k_2), (k_2,k_3), \ldots, (k_{r-1},j). 
\eeqo
A {\it rooted tree}  is a tree with a prominent vertex called the root.
Here we distinguish between trees $\Gamma $ and rooted trees $\Gamma_{(0)}$. 
We always assume that trees have vertex set $\{1,\ldots,n\}$ while for 
rooted trees $\{0,1,\ldots, n\}$ the vertex set and the root is labeled by $0$. 
For a given tree $\Gamma$ we 
construct a (special) rooted tree $\Gamma_{(0)}$ by adding a new vertex $0$, the root,
and a new edge $e_{0,1}$ from the root to vertex 1 to the old tree.

We number the vertices of the trees in the following way. 
We start with an 
arbitrary vertex which is then labeled with $1$. 
Following we proceed 
recursively by a depth-first 
search or numbering (dfs), see \cite{AHU}.

A {\it branch} starting at vertex $i$ of the  tree $\Gamma$  is the connected subgraph of $\Gamma$ including vertex $i$ obtained by deleting the unique edge $e_{j,i}$ with $ j < i $. Vertex $i$ then becomes the root of this particular branch. Thus  we label the vertices recursively branch by branch.  

Moreover,  we want to make use of so-called levels in a tree $\Gamma_0$. All  vertices that  are connected with  the root build level $1$. Vertices  that are connected with level $1$ vertices, which are not the  root, build level $2$. Inductively, vertices in level $k$ are vertices that  are connected with level $k-1$ vertices except level $k-2$ vertices.  

Then a predecessor of a level $k$ vertex $i$ is the vertex of level $k-1$ connected with vertex $i$. It is denoted  by $p(i)$. Successors of a level $k$ vertex $i$ are those vertices of level $k+1$ which are connected with $i$. These vertices are denoted by $s_j(i)$.  Leafs of the tree are vertices which have  no successors.
Note that  for trees every vertex except the root has exactly one predecessor. Every vertex, except  the leaves, has one or more successors.  


The {\it graph} $G(A)$ 
of an $n \times n$ matrix $A = [a_{i,j}]$ is the undirected graph
consisting of $n$ vertices $\{1,\ldots,n\}$ so that there is an edge between
vertex $i$ and vertex $j$ if and only if $a_{i,j} \neq 0 $ or $a_{j,i} \neq 0 $. 
Here, we consider irreducible acyclic matrices. Thus, we do have a symmetric nonzero pattern in $A$. 

$A$ is called {\it treediagonal} by Klein \cite{Kle82} if $G(A)$ is 
a forest, i.e. a collection of trees. However, in this paper we prefer the name 
{\it acyclic matrices} for matrices $A$ whose graphs  
$G(A)$ are forests.  We consider irreducible acyclic matrices $A$, thus $G(A)$ is a tree. 


If $G(A)$  is a tree then  we consider  the path between  two distinct indices $i$ and $j$, i.e. the ordered sequence  of edges 
$(k_1,k_2), (k_2, k_3),$ $\ldots (k_{r-1},k_r)$, where $k_1 = i$ and
$k_r = j$. We then consider  the sequence of entries of $A$
\beqo
a_{i,i} = a_{k_1,k_1}, a_{k_1, k_2}, a_{k_2, k_3}, \ldots a_{k_{r-1},k_r}, a_{k_{r},k_r} = a_{j,j}.
\eeqo
In sum, we  say for this ordered sequence: the entries  $a_{i,l}$ of $A$, where $l$ follows  the path from $a_{i,i}$ to $a_{j,j}$.

Klein  established in \cite{Kle82} some results on matrices 
whose graphs are trees. We will use these results in the next section.  \\

\begin{definition}
An $n \times n$ matrix $C = [c_{i,j}]$ satisfies the  {\it treeangle property} with respect to  a given tree $\Gamma$ if for every $i,j,k \in V$ with $P_{i,k} \subseteq P_{i,j}$
\beq \label{treeangle}
 c_{i,j}c_{k,k} = c_{i,k}c_{k,j}.
\eeq
\end{definition}

Klein then proved the following theorems 

\begin{theorem} \label{kl1}
Let $\Gamma$ be a tree. If a non-singular matrix $C$ satisfies the treeangle
property with respect to $\Gamma$ and if $c_{i,i} \neq 0$ for all interior
vertices, then $C^{-1}$ is treediagonal with respect to $\Gamma$, i.e. 
$C^{-1}$ is acyclic.
\end{theorem}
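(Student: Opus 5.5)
We have to show that $(C^{-1})_{i,j}=0$ whenever $i\neq j$ and $i,j$ are not adjacent in $\Gamma$. Equivalently, we will exhibit a treediagonal matrix $B$ with $CB=I$. The plan is to argue row by row: for a fixed pair $i\neq j$ that are not adjacent, we look for a vector supported only on $j$ and its neighbours that is orthogonal to the $i$-th row of $C$. This uses only the treeangle relations. Since $C$ is non-singular, the columns of $C^{-1}$ are determined by such orthogonality conditions, and this is the mechanism we will use.

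The concrete construction goes as follows. Fix a vertex $j$ with neighbour set $N(j)$. Every vertex $i\notin \{j\}\cup N(j)$ lies in exactly one branch hanging off $j$, say through the neighbour $k\in N(j)$. Then $P_{i,j}$ passes through $k$, so $P_{i,k}\subseteq P_{i,j}$, and the treeangle property (\ref{treeangle}) gives
\[
c_{i,j}c_{k,k}=c_{i,k}c_{k,j}.
\]
If $k$ is an interior vertex, then $c_{k,k}\neq 0$ and we obtain $c_{i,j}=c_{i,k}c_{k,j}/c_{k,k}$. If $k$ is a leaf, it has no further branch, so this case never arises for $i\notin\{j\}\cup N(j)$. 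If $j$ itself is a leaf we use its single neighbour. We then define the vector $x^{(j)}$ by
\[
x^{(j)}_j=1,\qquad x^{(j)}_k=-\,c_{k,j}/c_{k,k}\ \ (k\in N(j)\ \text{interior}),\qquad x^{(j)}_l=0 \ \text{otherwise}.
\]
Assuming $c_{j,j}$ is nonzero where needed, the $i$-th entry of $Cx^{(j)}$ is $c_{i,j}-\sum_k c_{i,k}c_{k,j}/c_{k,k}$. For $i$ in the branch through $k$, the treeangle property with respect to a different neighbour $k'$ gives $c_{i,k'}c_{j,j}=c_{i,j}c_{j,k'}$, because $P_{i,j}\subseteq P_{i,k'}$. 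The second key step is to combine these identities to show that $(Cx^{(j)})_i$ is proportional to a quantity independent of $i$. In fact it is cleaner to choose the coefficients so that $Cx^{(j)}$ vanishes off $\{j\}\cup N(j)$ and to use a dimension count.

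The cleaner global version, which is the one I would actually write, is this. Let $W$ be the space of treediagonal matrices for $\Gamma$. We show that the subspace $\{B\in W : (CB)_{i,l}=0 \text{ for all } i\neq l\}$ has enough freedom that some $B$ gives $CB$ diagonal and invertible. Working column by column, we need a vector $x$ supported on $\{j\}\cup N(j)$ such that $(Cx)_i=0$ for all $i\neq j$. For $i$ in the branch through the neighbour $k$, the treeangle relations reduce all such equations to one equation per neighbour, namely the equation at $i=k$. So we have $|N(j)|$ homogeneous equations in $|N(j)|+1$ unknowns, and a nonzero solution exists. Then $Cx=\alpha e_j$. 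Non-singularity of $C$ and $x\neq0$ force $\alpha\neq 0$, so $C^{-1}e_j=x/\alpha$ is supported on $\{j\}\cup N(j)$. This is exactly the statement that $C^{-1}$ is treediagonal.

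The main obstacle is the reduction step: showing that for every $i$ in the branch through $k$, the equation $(Cx)_i=0$ follows from the equation at $i=k$. The intended argument is that, for every $l\in\{j\}\cup N(j)$ and every such $i$, the path $P_{i,l}$ passes through $k$. Then (\ref{treeangle}) gives $c_{i,l}=c_{i,k}c_{k,l}/c_{k,k}$, so row $i$ restricted to $\{j\}\cup N(j)$ is a multiple of row $k$. This uses $c_{k,k}\neq0$, which is exactly where the hypothesis on interior vertices enters: $k$ is interior whenever its branch contains vertices beyond $k$. I would check carefully that the leaf case needs no division and that the inclusion $P_{i,k}\subseteq P_{i,l}$ holds for $l=k$ (trivially) and for all other $l$, using the uniqueness of paths in a tree.
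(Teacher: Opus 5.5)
The paper does not prove this statement. It is quoted from Klein~\cite{Kle82} and used as a black box, so there is no in-paper argument to compare yours with.

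Judged on its own, the version you say you would actually write is a complete and correct proof.
\begin{itemize}
\item Fix $j$. Each $i\neq j$ lies in a component of $\Gamma\setminus\{j\}$, and that component contains exactly one neighbour $k$ of $j$.
\item For every $l\in\{j\}\cup N(j)$ the path $P_{i,l}$ runs through $k$, so (\ref{treeangle}) gives $c_{i,l}c_{k,k}=c_{i,k}c_{k,l}$.
\item If that component is larger than $\{k\}$, then $k$ has degree at least two, so $c_{k,k}\neq 0$ by hypothesis. Row $i$ of $C$, restricted to $\{j\}\cup N(j)$, is then $c_{i,k}/c_{k,k}$ times row $k$ restricted there.
\item If $k$ is a leaf, its component is $\{k\}$, so $i=k$ and nothing has to be reduced.
\end{itemize}
Hence the conditions $(Cx)_i=0$, $i\neq j$, collapse to the $|N(j)|$ equations at the neighbours of $j$. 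A nonzero $x$ supported on $\{j\}\cup N(j)$ therefore exists, and $Cx=\alpha e_j$ with $\alpha\neq 0$ by non-singularity. So $C^{-1}e_j=x/\alpha$ has exactly the support required for treediagonality.

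The argument is elementary and local. It only uses treeangle identities whose middle vertex is a neighbour of $j$. It also shows exactly where the hypothesis enters: no condition on $c_{j,j}$, or on diagonal entries at leaves, is needed.

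When you write it up, delete the explicit construction in your second paragraph rather than presenting it as a first step. The choice $x^{(j)}_k=-c_{k,j}/c_{k,k}$ is wrong in general, because the cross terms from the other neighbours do not cancel. For the path with edges $\{1,2\},\{2,3\},\{3,4\},\{4,5\}$ and $j=3$, it gives $(Cx^{(3)})_2=-c_{2,4}c_{4,3}/c_{4,4}$. This is nonzero, for instance, when $C$ is the inverse of an irreducible strictly diagonally dominant tridiagonal matrix, since then all entries are nonzero by Proposition~\ref{Nab01}.

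Likewise, the sentence about choosing coefficients so that $Cx^{(j)}$ ``vanishes off $\{j\}\cup N(j)$'' should say ``vanishes off $\{j\}$''. The coefficients have to come from the reduced $|N(j)|\times(|N(j)|+1)$ homogeneous system, exactly as in your final version.
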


\begin{theorem} \label{kl2}
Let $\Gamma$ be a tree. If $A$ is a non-singular treediagonal matrix with respect to $\Gamma$ then $A^{-1}$ satisfies the  treeangle property with respect to 
$\Gamma$.
\end{theorem}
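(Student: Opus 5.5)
We prove Theorem \ref{kl2}: if $A$ is non-singular and treediagonal with respect to $\Gamma$, then $C=A^{-1}$ satisfies $c_{i,j}c_{k,k}=c_{i,k}c_{k,j}$ whenever $P_{i,k}\subseteq P_{i,j}$. The approach is to express the entries of $C$ through the adjugate, $c_{p,q}=(-1)^{p+q}\det A(q|p)/\det A$, and to use the tree structure of $G(A)$ to factor the relevant minors along the path. The degenerate cases are easy. If $k=i$, then (\ref{treeangle}) reads $c_{i,j}c_{i,i}=c_{i,i}c_{i,j}$. If $k=j$, it reads $c_{i,j}c_{j,j}=c_{i,j}c_{j,j}$. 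So we may assume that $k$ is an interior vertex of the path $P_{i,j}$.

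\textbf{Step 1 (path expansion of off-diagonal cofactors).} For $p\neq q$ with path $p=k_1,\dots,k_r=q$, expand $\det A(q|p)$ in the usual way for acyclic matrices, as in the tridiagonal case. Any permutation contributing to the minor that deletes row $q$ and column $p$ must map along the unique path from $p$ to $q$, because the graph has no cycles. Every other vertex must lie in a $1$- or $2$-cycle of the permutation. This gives
\[
c_{p,q} = \frac{(-1)^{r-1}\, a_{k_2,k_1}a_{k_3,k_2}\cdots a_{k_r,k_{r-1}}\; \det A[V\setminus \{k_1,\dots,k_r\}]}{\det A},
\]
with the appropriate index orientation, where $A[W]$ denotes the principal submatrix on $W$. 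The same formula holds for $c_{q,p}$ with the transposed edge weights. In particular $c_{k,k}=\det A[V\setminus\{k\}]/\det A$.

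\textbf{Step 2 (factorization of principal minors).} Deleting a vertex set $S$ from a tree leaves a forest. The principal submatrix on a forest is block diagonal after permutation, so its determinant is the product of the determinants over the connected components. Apply this with $S=P_{i,j}$, with $S=P_{i,k}$, with $S=P_{k,j}$, and with $S=\{k\}$. Write $\mathcal F(S)$ for the set of components of $\Gamma\setminus S$. The key combinatorial identity to check is the multiset equality
\[
\mathcal F(P_{i,j}) \,\cup\, \mathcal F(\{k\}) \;=\; \mathcal F(P_{i,k}) \,\cup\, \mathcal F(P_{k,j}),
\]
where components are matched via their determinants. To verify it, cut $\Gamma$ at $k$. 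The components of $\Gamma\setminus\{k\}$ are $T_i$ (containing $i$), $T_j$ (containing $j$), and the remaining ones $R_1,\dots$. Removing $P_{i,k}$ leaves the pieces of $T_i\setminus P_{i,k}$ together with $T_j$ and the $R$'s. Removing $P_{k,j}$ leaves $T_i$, the $R$'s, and the pieces of $T_j\setminus P_{k,j}$. Removing $P_{i,j}$ leaves the pieces of $T_i\setminus P_{i,k}$, the pieces of $T_j\setminus P_{k,j}$, and the $R$'s. Both sides then consist of $T_i$, $T_j$, two copies of the $R$'s, and the pieces of both path complements.

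\textbf{Step 3 (combine).} Insert the factorizations into $c_{i,j}c_{k,k}$ and into $c_{i,k}c_{k,j}$. The edge products agree, since the path edges from $i$ to $k$ followed by those from $k$ to $j$ are exactly the edges of $P_{i,j}$. The signs agree, since $(-1)^{r-1}\cdot 1=(-1)^{s-1}(-1)^{r-s}$, where $s$ is the position of $k$ on the path. The denominators are $(\det A)^2$ on both sides. The minors agree by Step 2. This proves (\ref{treeangle}).

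I expect the main obstacle to be Step 1, namely justifying carefully that only the path term survives in the off-diagonal cofactor and that its sign is correct. My approach is to argue via the permutation expansion. The deleted row $q$ and column $p$ force a chain $p\to\cdots\to q$ in the functional digraph of the permutation, and acyclicity forces this chain to be the tree path. All remaining cycles can only be fixed points or transpositions along edges, because a longer cycle would give a cycle in $G(A)$. Their total contribution is exactly $\det A[V\setminus P]$. An alternative that avoids the sign bookkeeping is induction on the path length using the Schur complement, with the tridiagonal case as a model.
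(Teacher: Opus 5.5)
Your proof is correct. It is essentially Klein's determinantal argument, which the paper does not reprove but only cites; the formula the paper later quotes from Klein's Theorem~1 for adjacent vertices is the case $r=2$ of your Step~1. The one slip is the edge orientation in Step~1: since $c_{p,q}=\frac{1}{\det A}\sum_{\sigma(q)=p}\mathrm{sgn}(\sigma)\prod_{l\neq q}a_{l,\sigma(l)}$, the surviving product is $a_{k_1,k_2}a_{k_2,k_3}\cdots a_{k_{r-1},k_r}$ (e.g.\ $c_{1,2}=-a_{1,2}/\det A$ for $n=2$), not $a_{k_2,k_1}\cdots a_{k_r,k_{r-1}}$. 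This is harmless, because $c_{i,j}$, $c_{i,k}$ and $c_{k,j}$ are all read in the same direction along $P_{i,j}$, so Step~3 is unaffected.
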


Moreover we have (see \cite{Nab01})

\begin{proposition} \label{Nab01}
Let $A =[a_{i,j}] \in \bRn $ be non-singular, irreducible and acyclic. Assume
that the diagonal entries of $C= [c_{i,j}] := A^{-1}$ are nonzero. Then 
$c_{i,j} \neq 0$ for all $i, j \in \{1,\ldots, n \}$. Hence, if $A$ is  irreducible, acyclic and diagonally dominant, then all entries of $C^{-1}$ are different from zero.  
\end{proposition}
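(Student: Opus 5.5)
The plan is to combine Klein's treeangle property (Theorem \ref{kl2}) with an induction on the distance between $i$ and $j$ in $G(A)$, using the equation $AC=I$ at the first interior vertex of the path. Write $C=A^{-1}$ and assume $c_{k,k}\neq 0$ for all $k$. For $i\neq j$ let $P_{i,j}$ be given by $k_1=i,k_2,\ldots,k_r=j$. I will show $c_{k_s,j}\neq 0$ for $s=r,r-1,\ldots,1$ by backward induction along the path. The case $s=r$ is the hypothesis $c_{j,j}\neq 0$; the case $s=1$ is the claim $c_{i,j}\neq 0$.

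\emph{Inductive step.} Fix $l=k_s$ with $s<r$, and let $m=k_{s+1}$ be its neighbour towards $j$. Since $P_{l,m}\subseteq P_{l,j}$, the treeangle identity (\ref{treeangle}) gives
\[
c_{l,j}\,c_{m,m}=c_{l,m}\,c_{m,j}.
\]
By induction $c_{m,j}\neq 0$, and $c_{m,m}\neq 0$ by hypothesis. So it suffices to prove $c_{l,m}\neq 0$ for adjacent vertices $l,m$, which reduces the problem to distance one.

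\emph{Adjacent vertices.} This is the main obstacle. I will use row $m$ of $CA=I$ and the tree structure. Suppose $c_{l,m}=0$ with $l\sim m$. Row $l$ of $C$ times column $m$ of $A$ gives
\[
0=\sum_{q} c_{l,q}a_{q,m}=c_{l,m}a_{m,m}+\sum_{q\sim m} c_{l,q}a_{q,m}.
\]
Every neighbour $q\neq l$ of $m$ has $P_{l,m}\subseteq P_{l,q}$. Treeangle then gives $c_{l,q}c_{m,m}=c_{l,m}c_{m,q}=0$, so $c_{l,q}=0$. The equation therefore reduces to $c_{l,l}a_{l,m}=0$. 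By irreducibility and acyclicity every edge of the tree corresponds to a nonzero entry; if only $a_{m,l}\neq 0$, use $AC=I$ instead, symmetrically. This forces $c_{l,l}=0$, a contradiction. Hence $c_{l,m}\neq 0$, and the induction closes.

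\emph{Diagonally dominant case.} For the final sentence of the statement, it remains to show $c_{k,k}\neq 0$. Here I take diagonal dominance as strict, or irreducibly diagonally dominant, so that $A$ is nonsingular. Nonsingularity of the principal submatrix $A(k|k)$ also follows, since deleting a row and column preserves strict dominance; in the irreducible case one argues per connected component via Taussky. By Cramer's rule, $c_{k,k}=\det A(k|k)/\det A\neq 0$. The first part then applies.
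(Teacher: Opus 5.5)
The paper gives no proof of this proposition; it imports it from \cite{Nab01}. Your argument is a correct, self-contained proof of it.

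The reduction to adjacent vertices via the treeangle identity $c_{l,j}c_{m,m}=c_{l,m}c_{m,j}$ is sound. The adjacent case is also handled correctly: if $c_{l,m}=0$, Theorem~\ref{kl2} forces $c_{l,q}=0$ for every neighbour $q\neq l$ of $m$, so the $(l,m)$ entry of $CA=I$ collapses to $c_{l,l}a_{l,m}=0$. Compared with the bare citation, your route needs nothing beyond Klein's theorem and one entry of $CA=I$. It also shows exactly where each hypothesis enters: $c_{m,m}\neq 0$ and $c_{l,l}\neq 0$ for the contradiction, and irreducibility for $a_{l,m}\neq 0$. The Cramer's-rule argument for the diagonally dominant part is fine, reading the statement's ``$C^{-1}$'' as the typo for $C$ that it is.

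Two small corrections.

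First, delete the fallback ``if only $a_{m,l}\neq 0$, use $AC=I$ instead, symmetrically''. It does not work. The $(l,m)$ entry of $AC=I$ reduces, by the same treeangle argument applied to the neighbours of $l$, to $a_{l,m}c_{m,m}=0$, which again needs $a_{l,m}\neq 0$. The conclusion genuinely fails in that situation: $A=\left[\begin{array}{cc} a & 0\\ b & d\end{array}\right]$ with $abd\neq 0$ has a tree as its graph and nonzero diagonal in $A^{-1}$, yet $c_{1,2}=0$. The case simply cannot occur under the hypotheses. If $G(A)$ is a tree, deleting the edge $\{l,m\}$ disconnects it, so irreducibility (strong connectivity of the directed graph) forces both $a_{l,m}\neq 0$ and $a_{m,l}\neq 0$. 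This is the symmetric nonzero pattern the paper mentions, and you should say that instead.

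Second, in the irreducibly dominant case, the per-component Taussky argument for $A(k|k)$ needs a strictly dominant row in each component of $\Gamma$ with $k$ removed. That row is supplied by the unique neighbour $q$ of $k$ in the component, since row $q$ loses the term $|a_{q,k}|>0$. In fact this shows $c_{k,k}\neq 0$ for every nonsingular, irreducible, acyclic, weakly diagonally dominant $A$.
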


The above Theorems  describe the structure of inverses of acyclic 
matrices. However, it is not clear how one can describe 
the treeangle property in terms of matrices as in the tridiagonal case. Another question that has to beraised is what are the generalizations of Green's matrices?  
To find an answer we need the following class of matrices.

\begin{definition}
Let $\Gamma_{(0)}$ be a weighted rooted tree with vertex set $\{0,1,\ldots, n\}$, where $0$ denotes the root. A matrix $A = [a_{i,j}] \in \bRn$ is of 
{\it tree structure} with respect to $\Gamma_{(0)}$, if for all $i,j = 1,\ldots,n$
\beq \label{dis}
a_{i,j} & = & \sum_{\{r,s\} \in P_{i,0} \cap P_{j,0}} w_{r,s},  
\eeq
here $\{r,s\} \in P_{i,0} \cap P_{j,0}$ denotes a common edge of the paths 
$P_{i,0}$ and  $P_{j,0}$  and $w_{r,s}$ is its weight.
\end{definition}

Note that (\ref{dis}) defines a 'distance' or better an 'inverse distance' 
between the vertices of the tree. This distance was already used  
by Nabben and Varga in \cite{NabV95b} for leaves of trees. \\
If all   weights  are non-negative, matrices of tree structure are ultrametric matrices \cite{NabV94, VarN93, Fie00}. We can also define non-symmetric matrices of tree structure by introducing a second set of weights which are used to build the $a_{j,i}$ similar similar to  (\ref{dis}). This then leads to generalized ultrametric matrices \cite{NabV95a, NabV95b, Nab97, McDNNST98, ElsNN98, DelMM14}. Example \ref{ex1} gives a matrix of tree-structure. The paper \cite{Nab26b} gives several relations between matrices of tree structure, generalized ultrametric matrices and inverse acyclic matrices. 

Moreover, matrices of tree  structure as well as ultrametric matrices can be decomposed as the sum of $2n-1$ rank one matrices, see Theorem 2.5 in \cite{Nab01}. 
\begin{theorem} \label{cha}
Let $\Gamma_{(0)}$ be a weighted rooted tree. The following are equivalent:
\begin{enumerate}
\item[(1)]
$A = [a_{i,j}] \in \bRn$ is
of tree structure with respect to $\Gamma_{(0)}$.
\item[(2)]
$A$ can be decomposed as
\beq \label{decom}
A = \sum_{i=1}^n \tau_i u_iu_i^T
\eeq
where $\tau_i \in \bR$ and $\tau_i = w_{j,i}$, here $j$ is the unique vertex
which is connected with $i$ and $j < i$. The vectors $u_i \in \bRn$ satisfy
$(u_i)_j =1 $ for all $j$ (including $i$) belonging to the branch of $\Gamma_{(0)}$ starting at vertex $i$, and $(u_i)_j = 0$ otherwise.
\end{enumerate}

\end{theorem}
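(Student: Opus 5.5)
The plan is to prove the equivalence entrywise. By (\ref{decom}), the $(j,k)$ entry of $\sum_{i=1}^n \tau_i u_iu_i^T$ equals $\sum_{i=1}^n \tau_i (u_i)_j (u_i)_k$. This sum therefore collects exactly those weights $\tau_i = w_{p(i),i}$ for which both $j$ and $k$ lie in the branch of $\Gamma_{(0)}$ starting at $i$. It then suffices to show that, for each $i \in \{1,\ldots,n\}$, the edge $e_{p(i),i}$ lies on both $P_{j,0}$ and $P_{k,0}$ if and only if $j$ and $k$ both belong to the branch starting at $i$.

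First I fix the bookkeeping between edges and vertices. In $\Gamma_{(0)}$ every vertex $i \ge 1$ has a unique predecessor, and with the dfs numbering (and the root $0$ attached to vertex $1$) this predecessor is the unique neighbour $j<i$. Hence the map $i \mapsto e_{p(i),i}$ is a bijection from $\{1,\ldots,n\}$ onto the edge set of $\Gamma_{(0)}$, so the weights $\tau_i$ are exactly the edge weights. This uses the fact that dfs labels every vertex after its parent, which I will justify from the recursive branch-by-branch labelling.

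The key lemma is that, for a vertex $m\ge1$, the edge $e_{p(i),i}$ lies on $P_{m,0}$ if and only if $m$ is in the branch starting at $i$. The path from $m$ to the root is obtained by repeatedly stepping to predecessors, so it uses exactly the edges $e_{p(l),l}$ for $l$ an ancestor of $m$ (including $m$ itself). On the other hand, the branch at $i$ is the component containing $i$ after deleting $e_{p(i),i}$, and this component is precisely the set of descendants of $i$. Indeed, a vertex $m$ is in the component not containing $0$ exactly when the unique path from $m$ to $0$ must cross the deleted edge. This is immediate from uniqueness of paths in a tree.

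Applying the lemma to $j$ and $k$ simultaneously, $\{r,s\}\in P_{j,0}\cap P_{k,0}$ with $\{r,s\}=e_{p(i),i}$ holds if and only if $(u_i)_j(u_i)_k=1$. Summing over $i$ then gives (\ref{dis}) equal to (\ref{decom}) entrywise. This proves (1)$\Rightarrow$(2) with the stated $\tau_i$ and $u_i$, and also (2)$\Rightarrow$(1), since a matrix given by (\ref{decom}) is then the tree-structure matrix with weights $w_{p(i),i}=\tau_i$.

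The main obstacle is the lemma identifying the branch with the set of descendants, in particular that the dfs convention ``$j<i$'' really picks out the predecessor toward the root. I will handle this by induction on the level, using that dfs assigns each vertex a label larger than its parent's.
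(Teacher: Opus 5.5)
Your argument is correct. The paper itself gives no proof of this statement; it is quoted as Theorem 2.5 of \cite{Nab01}, so there is no in-text argument to compare against.

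Your entrywise verification is a complete, self-contained proof of both directions. It rests on two facts: every edge of $\Gamma_{(0)}$ is $e_{p(i),i}$ for exactly one $i\in\{1,\ldots,n\}$, and $e_{p(i),i}\in P_{m,0}$ if and only if $m$ lies in the branch starting at $i$. Your remark that (2) with arbitrary $\tau_i$ is the tree-structure matrix for the weights $w_{p(i),i}:=\tau_i$ also covers the reading of the equivalence in which the weights are not fixed in advance.

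Two of your steps can be simplified.

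First, the only property of the numbering you use is that labels strictly decrease along the chain $i, p(i), p(p(i)), \ldots, 0$. This is immediate from the fact that dfs labels a vertex only after its parent, so no induction on levels is needed, and any parent-before-child labelling would do. It also gives both the uniqueness of the neighbour $j<i$ and the fact that this neighbour is the first vertex of $P_{i,0}$.

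Second, in the key lemma you do not need to identify the branch with the set of descendants. It is enough that $0$ lies on the $p(i)$-side of the deleted edge $e_{p(i),i}$. This holds because the chain $p(i), p(p(i)), \ldots, 0$ has all labels smaller than $i$, so it connects $p(i)$ to $0$ without using $e_{p(i),i}$. Uniqueness of paths then gives the equivalence exactly as you wrote it.
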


We  then can state the main result of \cite{Nab01}, which describes the   structure of the inverse of an acyclic matrix. These results  generalize  the classical results  for tridiagonal matrices. 


\begin{theorem} \label{main-structure}
Let $A \in \bRn$ be non-singular, irreducible and acyclic and let $\Gamma = G(A)$.  Assume that the diagonal entries of $A^{-1}$ are nonzero. 
Then there exist matrices  $T$ and $R$ of the form 

\beq \label{def-neu}
 T = \left[ \begin{array}{ccccc}
d_1 & d_1 & \cdots &\cdots & d_1  \\
f_1 & d_2 & \cdots &\cdots & d_2 \\
f_1 & f_2 & d_3 & \cdots & d_3 \\
 \vdots & \vdots & & \ddots & \vdots \\
f_1 & f_2 & \cdots & \cdots & d_n
\end{array} \right] \quad \quad
R = \left[ \begin{array}{ccccc}
f_1 & f_2 & \cdots & \cdots & f_n  \\
d_2 & f_2 & \cdots & \cdots & f_n \\
d_3 & d_3 & f_3 & \cdots & f_n \\
 \vdots & \vdots & & \ddots & \vdots \\
d_n & d_n & \cdots & \cdots & f_n
\end{array} \right],
\eeq
with $f_1 = d_1$, and a matrix $U$ of tree structure with respect to the weighted tree $\Gamma_{(0)}$ such that

\beqo
A^{-1} = T \circ U \circ R.
\eeqo
Conversely, let $U = [u_{i,j}] \in \bRn$ be non-singular. If $U$ is of tree structure with respect to a given rooted
tree $\Gamma_{(0)}$, then for any matrices $T$ and $R$ of the form 
$(\ref{def-neu})$ with $f_1 = d_1$ and  $f_id_i \neq 0$ for all $i$, the matrix 
$(T \circ U \circ R)^{-1}$ is irreducible, acyclic and $G((T \circ U \circ R)^{-1}) = \Gamma.$
\end{theorem}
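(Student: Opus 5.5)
The plan is to prove both directions from Klein's results, Theorems \ref{kl1} and \ref{kl2}, together with the decomposition in Theorem \ref{cha}.

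\emph{Forward direction.} Number $\Gamma=G(A)$ by dfs. Then for every vertex $i$ the predecessor satisfies $p(i)<i$, and every branch is a contiguous index interval. Put $C=A^{-1}$. By Proposition \ref{Nab01}, all entries $c_{i,j}$ are nonzero. By Theorem \ref{kl2}, $C$ satisfies the treeangle property, so for any $k$ on $P_{i,j}$ we have $c_{i,j}=c_{i,k}c_{k,j}/c_{k,k}$. Iterating this along the path $i=k_1,\ldots,k_r=j$ gives
\[
c_{i,j}=\frac{\prod_{t=1}^{r-1} c_{k_t,k_{t+1}}}{\prod_{t=2}^{r-1} c_{k_t,k_t}},
\]
so $C$ is determined by its diagonal and by its entries on tree edges. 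The task is then to match this with the Hadamard product $T\circ U\circ R$. For $i<j$ the $(i,j)$ entry of $T\circ R$ is $d_i f_j$, and for $i>j$ it is $f_i d_j$. The $(i,j)$ entry of $U$ is the sum of the weights on the common part of $P_{i,0}$ and $P_{j,0}$, that is, on the path from the root to the nearest common ancestor $m=m(i,j)$. Call this sum $\sigma_m$; it satisfies $\sigma_m=\sigma_{p(m)}+w_{p(m),m}$.

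I will try to define the parameters directly. Set
\[
d_i f_i\,\sigma_i=c_{i,i},\qquad \frac{d_i}{f_i}=\prod_{\text{edges }(p(l),l)\text{ on }P_{1,i}}\rho_l,
\]
where each edge ratio $\rho_l$ is chosen so that the ratio of symmetric entries is matched. The intended ansatz for $i\le j$ is $c_{i,j}=d_i\,\sigma_{m(i,j)}\,f_j$. To check it against the product formula, split $P_{i,j}$ at $m$: the piece from $i$ up to $m$ and the piece from $m$ down to $j$. Consider the one-step case $j=s(i)$, where $m=i$. The formula gives $c_{i,s}=d_i\sigma_i f_s$, and consistency with the diagonal reduces to a recursion that determines $f_s/d_s$ and $w_{i,s}$ from $c_{i,s}$, $c_{s,i}$, $c_{i,i}$ and $c_{s,s}$.

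A cleaner route, which I would try first, is induction on $n$ by removing a leaf $\ell$ (the largest dfs index). The principal submatrix of $C$ on the remaining indices again satisfies the treeangle property with respect to $\Gamma\setminus\{\ell\}$. By Theorem \ref{kl1} it is the inverse of an acyclic matrix, so the induction hypothesis gives it the form $T'\circ U'\circ R'$. Row and column $\ell$ are then filled in by the treeangle identity $c_{\ell,j}=c_{\ell,p}c_{p,j}/c_{p,p}$, with $p=p(\ell)$. This leaves three free parameters: $d_\ell$, $f_\ell$ and the weight $w_{p,\ell}$. They are fixed by matching the three numbers $c_{\ell,p}$, $c_{p,\ell}$ and $c_{\ell,\ell}$. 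After that one verifies that, for every $j<\ell$, the entry $c_{\ell,j}=f_\ell\,\sigma_{m(\ell,j)}\,d_j$ agrees with the treeangle value. Here $m(\ell,j)=m(p,j)$ unless $j$ lies in a branch through $p$, in which case $m(\ell,j)=p$ itself. The main obstacle is this consistency check, in particular the case where $j$ lies in a sibling branch of $p$ and $\sigma$ changes. I expect it to go through because $d_j$, $f_j$ and $\sigma_m$ for $j<\ell$ already encode $c_{p,j}$.

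\emph{Converse.} First show that $C=T\circ U\circ R$ satisfies the treeangle property with respect to $\Gamma$. Take $k$ on $P_{i,j}$. With the ansatz above, the identity $c_{i,j}c_{k,k}=c_{i,k}c_{k,j}$ reduces to $\sigma_{m(i,j)}\sigma_k=\sigma_{m(i,k)}\sigma_{m(k,j)}$. This holds because one of $m(i,k)$, $m(k,j)$ equals $k$ and the other equals $m(i,j)$. The cases split according to whether $k$ lies above or below $m(i,j)$, and a short check is needed to confirm that the $d$ and $f$ factors also cancel when $k$ falls between $i$ and $j$ in index order. Next, the diagonal entries are $d_i f_i\sigma_i$. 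These are nonzero because $U$ is nonsingular: by Theorem \ref{cha}, $\det U=\prod\tau_i$ up to a unimodular transformation, so every $\tau_i\neq0$, and therefore every $\sigma_i\neq0$ once one notes that $U=L\,\mathrm{diag}(\tau)L^T$ with $L$ unitriangular. Also $C$ is nonsingular, since $T\circ U\circ R=D_1\,\tilde U\,D_2$ with $\tilde U$ congruent-like to $U$. Theorem \ref{kl1} then shows that $C^{-1}$ is acyclic with respect to $\Gamma$. Finally, irreducibility and the equality $G(C^{-1})=\Gamma$ follow because every tree edge $(p(i),i)$ carries a nonzero entry of $C^{-1}$. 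That entry is $-c_{p,i}/(c_{p,p}c_{i,i}-c_{p,i}c_{i,p})$, up to the standard formula for inverses satisfying the treeangle property, and $c_{p,i}\neq0$.
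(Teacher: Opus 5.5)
The paper does not prove this theorem; it is quoted from \cite{Nab01}. Judged on its own, your proposal has two genuine gaps.

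\textbf{First gap: the Hadamard product is misread.} Below the diagonal, row $i$ of $T$ is $f_1,\ldots,f_{i-1}$ and row $i$ of $R$ is $d_i,\ldots,d_i$. So for $i>j$ the entry is $(T\circ R)_{i,j}=d_if_j$, not $f_id_j$. Hence $T\circ R=df^T$ and $T\circ U\circ R=DUF$ with $D=\mathrm{diag}(d)$, $F=\mathrm{diag}(f)$.

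Under your reading, $(T\circ R)_{i,j}=d_{\min(i,j)}f_{\max(i,j)}$ is symmetric. Then $T\circ U\circ R$ would be symmetric, and the forward claim would fail for every nonsymmetric $A$. This is what breaks your leaf step. Your formula gives $c_{\ell,p}=c_{p,\ell}=d_p\sigma_pf_\ell$, so you cannot match the three numbers $c_{\ell,p}$, $c_{p,\ell}$, $c_{\ell,\ell}$. The consistency check you leave as an expectation is therefore false as you set it up.

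The same misreading affects the $d,f$ bookkeeping you defer in the converse. Under your formula the factors do not cancel for, e.g., $k=m(i,j)<i<j$. Under the correct formula both sides carry $d_if_jd_kf_k$, so they cancel identically.

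With $T\circ U\circ R=DUF$, the forward direction is direct and needs no induction.
\begin{itemize}
\item Put $d_i=c_{i,1}$, $f_j=c_{1,j}$ and $\sigma_m=c_{m,m}/(c_{m,1}c_{1,m})$; this is legitimate by Proposition~\ref{Nab01}.
\item For $m$ on $P_{i,1}$, the treeangle identities $c_{i,1}c_{m,m}=c_{i,m}c_{m,1}$ and $c_{1,i}c_{m,m}=c_{1,m}c_{m,i}$ give $c_{i,m}=d_if_m\sigma_m$ and $c_{m,i}=d_mf_i\sigma_m$.
\item One more application at $m=m(i,j)$ gives $c_{i,j}=d_if_j\sigma_{m(i,j)}$.
\item Take $w_{0,1}=\sigma_1$ and $w_{p(m),m}=\sigma_m-\sigma_{p(m)}$. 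These are nonzero because $U=D^{-1}CF^{-1}$ is nonsingular and $\det U=\prod_i\tau_i$.
\end{itemize}
Your leaf induction also works once corrected. Then $m(\ell,j)=m(p,j)$ for every $j\neq\ell$, so there is no sibling case.

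\textbf{Second gap: $\tau_i\neq0$ does not imply $\sigma_i\neq0$.} In the converse you infer $\sigma_i\neq 0$ from $\tau_i\neq 0$ for all $i$. This fails because $\sigma_i$ is a sum of weights along $P_{i,0}$. Take $\Gamma_{(0)}$ the path on $0,1,2,3$ with $w_{0,1}=1$, $w_{1,2}=-1$, $w_{2,3}=1$. Then $u_{2,2}=0$ at the interior vertex $2$, although $\det U=-1$. Consequently $c_{2,2}=0$, so Theorem~\ref{kl1} does not apply. Also $c_{2,3}=d_2f_3\sigma_2=0$, so your edge-entry formula becomes $0/0$ on the edge between $2$ and $3$.

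The conclusion is still true, but it must be proved without Klein's theorem. By Theorem~\ref{cha}, $U=L\,\mathrm{diag}(\tau)L^T$ with $L^{-1}=I-N$, where $N_{i,p(i)}=1$ and all other entries of $N$ vanish. Hence $U^{-1}=(I-N)^T\mathrm{diag}(\tau)^{-1}(I-N)$. Its off-diagonal entries are $-1/\tau_i$ at positions $(i,p(i))$ and $(p(i),i)$, and zero elsewhere. Therefore $(T\circ U\circ R)^{-1}=F^{-1}U^{-1}D^{-1}$ is irreducible and acyclic with graph exactly $\Gamma$. No condition on the diagonal of $T\circ U\circ R$ is needed.
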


Note again that here $\Gamma_{(0)} $ is obtained from $\Gamma$ by adding the root $0$ and a 
new edge $e_{0,1}$.

The sequences $\{f_i\}$ and $\{d_i\}$ are given by 
$F = diag(f_1,\ldots,f_n) = diag(e_1^TA^{-1})$ and 
$D = diag(d_1,\ldots,d_n) = diag(A^{-1}e_1)$.

We than have $3n - 2$ parameters in Theorem \ref{main-structure}, which gives the inverse of an acyclic matrix. The $n-1$ parameters for the matrix of tree  structure, and $2n -1$ for the $f_j$ and $d_j$, since $f_1 = d_1$.  So  we have exactly the same number of parameters for the acyclic matrix as for its inverse. 

For the symmetric case we obtain

\begin{corollary} \label{coro-hadasym}
Let $A \in \bRn$ be symmetric non-singular, acyclic and irreducible with  
$G(A) = \Gamma$. Assume that the diagonal entries of $A^{-1}$ are nonzero. 
Then there exist matrices  $T$ and $R$ of the form 

\beq \label{defsym-neu}
 T = \left[ \begin{array}{ccccc}
d_1 & d_1 & \cdots &\cdots & d_1  \\
d_1 & d_2 & \cdots &\cdots & d_2 \\
d_1 & d_2 & d_3 & \cdots & d_3 \\
 \vdots & \vdots & & \ddots & \vdots \\
d_1 & d_2 & \cdots & \cdots & d_n
\end{array} \right] \quad \quad
R = \left[ \begin{array}{ccccc}
d_1 & d_2 & \cdots & \cdots & d_n  \\
d_2 & d_2 & \cdots & \cdots & d_n \\
d_3 & d_3 & d_3 & \cdots & d_n \\
 \vdots & \vdots & & \ddots & \vdots \\|c_{i,i}| \epsilon_{ij}^{r-1} 
d_n & d_n & \cdots & \cdots & d_n
\end{array} \right].
\eeq
and a matrix $U$ of tree structure with respect to the weighted tree $\Gamma_{(0)}$ such that

\beq \label{hada-sym}
A^{-1} = T \circ U \circ R.
\eeq
Conversely, let $U = [u_{i,j}] \in \bRn$ be non-singular. If $U$ is of tree structure with respect to a given rooted
tree $\Gamma_{(0)}$. Then for any matrices $T$ and $R$ of the form 
$(\ref{defsym})$ with $d_i \neq 0$ for all $i$, the matrix 
$(T \circ U \circ R)^{-1}$ is irreducible, acyclic and $G((T \circ U \circ R)^{-1}) = \Gamma.$
\end{corollary}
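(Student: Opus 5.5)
The plan is to derive the corollary from Theorem \ref{main-structure} by showing that symmetry of $A$ forces $f_i = d_i$ for all $i$. The forward direction goes as follows.

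Since $A$ is symmetric, non-singular, irreducible and acyclic with nonzero diagonal entries of $A^{-1}$, Theorem \ref{main-structure} gives $A^{-1} = T \circ U \circ R$, with $T,R$ of the form (\ref{def-neu}) and $U$ of tree structure with respect to $\Gamma_{(0)}$. By the remark after that theorem, $F = diag(e_1^T A^{-1})$ and $D = diag(A^{-1}e_1)$, so $f_i = (A^{-1})_{1,i}$ and $d_i = (A^{-1})_{i,1}$. Because $A^{-1}$ is symmetric, $f_i = d_i$ for every $i$, and this is consistent with the requirement $f_1 = d_1$. Substituting $f_i = d_i$ into (\ref{def-neu}) turns $T$ and $R$ into exactly the matrices in (\ref{defsym-neu}), which gives (\ref{hada-sym}).

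For the converse, let $U$ be non-singular and of tree structure, and let $T,R$ be of the form (\ref{defsym-neu}) with all $d_i \neq 0$. These are the special case $f_i := d_i$ of (\ref{def-neu}). Then $f_1 = d_1$ holds automatically, and $f_i d_i = d_i^2 \neq 0$. The converse part of Theorem \ref{main-structure} therefore applies and shows that $(T\circ U\circ R)^{-1}$ is irreducible, acyclic, and has graph $\Gamma$. If one also wants symmetry, note that $U$ is symmetric by (\ref{dis}), since the defining sum is symmetric in $i,j$. Moreover $R = T^T$ when $f = d$, so $T\circ U\circ R$ is symmetric, and hence so is its inverse.

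The only step needing care is identifying $f_i, d_i$ with the first row and column of $A^{-1}$. This is where the normalization of the tree-structure weights enters. The identification relies on the paper's stated formulas for $F$ and $D$; if those were not available, I would verify it directly from the first row of $T\circ U\circ R$: there $t_{1,j} = d_1$, $r_{1,j} = f_j$, and $u_{1,j} = w_{0,1}$ is constant. The identification then holds once the root weight is normalized with $d_1 w_{0,1} = 1$, which is where $f_1 = d_1$ is used.
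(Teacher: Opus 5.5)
Your proposal is correct and follows the paper's intended route. The corollary is Theorem \ref{main-structure} specialized via $f_i=(A^{-1})_{1,i}=(A^{-1})_{i,1}=d_i$, and the converse is the case $f_i=d_i$, which gives $f_1=d_1$ and $f_id_i=d_i^2\neq 0$.

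Two small slips occur in your side remarks, and neither affects the argument:
\begin{itemize}
\item $R\neq T^T$ in general. For $f=d$ one has $t_{i,j}=d_{\min(i,j)}$ and $r_{i,j}=d_{\max(i,j)}$, so $T$ and $R$ are each symmetric, and that is why $T\circ U\circ R$ is symmetric.
\item No normalization of $w_{0,1}$ is needed. Using $f_1=d_1$, you get $(A^{-1})_{1,j}=d_1w_{0,1}f_j$ and $(A^{-1})_{j,1}=d_1w_{0,1}d_j$, and $d_1w_{0,1}\neq 0$ because the diagonal entries of $A^{-1}$ are nonzero. Symmetry of $A^{-1}$ then forces $f_j=d_j$ directly.
\end{itemize}
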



\begin{example} \label{ex1}
{\rm For illustration consider the weighted rooted tree in Figure 1. The 
$6 \times 6$ matrix of tree 
structure is then given by }
\[
A = \left[ \begin{array}{rrrrrr}
-1 & -1 & -1 & -1 & -1 & -1\\
-1 & 2 & 2 & -1 & -1 & -1 \\
-1 & 2 & 3 & -1 & -1 & -1 \\
-1 & -1 & -1 & 1 & 1 & 1 \\
-1 & -1 & -1 &  1 & -1 & 1 \\
-1 & -1 & -1 & 1 & 1 & 2
\end{array}\right] .
\]
{\rm Moreover $A$ can be decomposed as }

\beqo
A = \sum_{i =1}^{5} ~ \tau_{i} {\bf u}_{i} {\bf u}_{i}^{T}
\eeqo
{\rm where the $u_i$ are given as in Figure 1 and the  
$\{ \tau_{1},\ldots, \tau_{6}\} = \{ -1,3,1,2,-2,1\}$. }

{\rm 
\noindent
\unitlength0.8cm
\begin{picture}(12,7)
\put(8.5,0){\makebox(0,0)[lb]{$u_6:=[0,0,0,0,0,1]^T$}}
\put(8.5,0.7){\makebox(0,0)[lb]{$u_3:=[0,0,1,0,0,0]^T$; $u_5:=[0,0,0,0,1,0]^T$}}
\put(8.5,2){\makebox(0,0)[lb]{$u_2:=[0,1,1,0,0,0]^T$; $u_4:=[0,0,0,1,1,1]^T$}}
\put(8.5,4){\makebox(0,0)[lb]{$u_1:=[1,1,1,1,1,1]^T$}}
\put(5.5,0){\circle*{0.2}}
\put(7.5,0){\circle*{0.2}}
\put(3.5,0){\circle*{0.2}}
\put(4.5,2){\circle*{0.2}}
\put(6.5,2){\circle*{0.2}}
\put(5.5,4){\circle*{0.2}}
\put(4,6){\circle*{0.2}}
\put(5.8,0){\makebox(0,0)[lb]{{\bf 5}}}
\put(5.3,1){\makebox(0,0)[lb]{-2}}
\put(4.2,0.7){\makebox(0,0)[lb]{1}}
\put(7.3,1){\makebox(0,0)[lb]{1}}
\put(4.3,3){\makebox(0,0)[lb]{3}}
\put(6.3,3){\makebox(0,0)[lb]{2}}
\put(5.0,5){\makebox(0,0)[lb]{-1}}
\put(3.8,0){\makebox(0,0)[lb]{{\bf 3}}}
\put(4.7,2){\makebox(0,0)[lb]{{\bf 2}}}
\put(7.8,0){\makebox(0,0)[lb]{{\bf 6}}}
\put(6.8,2){\makebox(0,0)[lb]{{\bf 4}}}
\put(5.8,4){\makebox(0,0)[lb]{{\bf 1}}}
\put(4.4,6){\makebox(0,0)[lb]{{\bf 0}}}
\put(5.5,0){\line(1,2){1}}
\put(7.5,0){\line(-1,2){1}}
\put(3.5,0){\line(1,2){1}}
\put(4.5,2){\line(1,2){1}}
\put(6.5,2){\line(-1,2){1}}
\put(5.5,4){\line(-3,4){1.5}}
\end{picture}
\vspace{.05in}

\begin{center}
{\sc Figure 1} 
\end{center}
}
\end{example}


%






Ostrowski established in \cite{O2} upper and lower bounds for the entries of the inverse of an arbitrary diagonally dominant matrix. 


\begin{lemma} \label{wc}
Let $A$ be a strictly row diagonally dominant, then $A^{-1} = C = [c_{i,j}]$ exists  and for
$i \neq j$ we have 
\beq \label{os}
|c_{i,j}| \leq \mu_i|c_{i,i}| \leq |c_{i,i}|.
\eeq
and 
\beq \label{osab} 
\frac{1}{|a_{i,i}|(1 + \mu_i)} \leq |c_{i,i}| \leq \frac{1}{|a_{i,i}|(1 - \mu_i)},
\eeq
where 
\beq \label{muus}
\mu_i := \frac{1}{|a_{i,i}|}\sum_{j\neq i}|a_{i,j}|.
\eeq

\end{lemma}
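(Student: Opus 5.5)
The plan is a maximum-principle argument on the columns of $C=A^{-1}$, read off from $AC=I$. It cleanly yields the second inequality (\ref{osab}). For the first inequality (\ref{os}) as printed, I expect the main obstacle to be fatal: the same-row form does not follow from row dominance, and I will show that by a $2\times 2$ counterexample.

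\emph{Step 1: invertibility and the column maximum principle.} Strict row diagonal dominance gives invertibility by Gershgorin. Fix a column $j$ and put $x=Ce_j$, so $Ax=e_j$. Let $k$ be an index with $|x_k|=\max_l|x_l|>0$. If $k\neq j$, the $k$-th equation $a_{k,k}x_k=-\sum_{l\neq k}a_{k,l}x_l$ gives
\[
|a_{k,k}||x_k|\le \mu_k|a_{k,k}||x_k|,
\]
which is impossible since $\mu_k<1$. Hence the maximum is attained at $k=j$. Reading the $i$-th equation again for $i\neq j$ then gives $|x_i|\le\mu_i\max_l|x_l|=\mu_i|x_j|$, that is,
\[
|c_{i,j}|\le\mu_i|c_{j,j}|\le|c_{j,j}|\qquad (i\neq j).
\]

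\emph{Step 2: the diagonal bounds (\ref{osab}).} Take row $i$ of $AC=I$ in column $i$:
\[
a_{i,i}c_{i,i}+\sum_{l\neq i}a_{i,l}c_{l,i}=1 .
\]
By Step 1 applied to column $i$, $|c_{l,i}|\le|c_{i,i}|$ for $l\neq i$, so
\[
|1-a_{i,i}c_{i,i}|\le\mu_i|a_{i,i}||c_{i,i}|.
\]
The triangle inequality in both directions then gives
\[
(1-\mu_i)|a_{i,i}||c_{i,i}|\le 1\le(1+\mu_i)|a_{i,i}||c_{i,i}|,
\]
which is exactly (\ref{osab}).

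\emph{Step 3: the same-row inequality (\ref{os}).} To bound $|c_{i,j}|$ by $|c_{i,i}|$, I would try the row vector $y=e_i^TC$, which satisfies $yA=e_i^T$. Running the same maximum argument on $yA=e_i^T$ requires \emph{column} diagonal dominance of $A$, and it produces the column ratios $\frac{1}{|a_{l,l}|}\sum_{k\neq l}|a_{k,l}|$, not $\mu_i$.

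I do not see how to repair this, and in fact the statement as printed fails. Take
\[
A=\left[\begin{array}{cc} a & b\\ c & d\end{array}\right],\qquad
C=A^{-1}=\frac{1}{ad-bc}\left[\begin{array}{cc} d & -b\\ -c & a\end{array}\right].
\]
Then $|c_{1,2}|/|c_{1,1}|=|b|/|d|$, while $\mu_1=|b|/|a|$. For instance $a=10$, $b=1$, $c=0$, $d=2$ is strictly row dominant but gives
\[
|c_{1,2}|=\frac{|b|}{|ad-bc|}=\frac{1}{20}>\frac{1}{100}=\frac{|b|}{|a|}\cdot\frac{|d|}{|ad-bc|}=\mu_1|c_{1,1}|.
\]
So (\ref{os}) cannot be proved under the stated hypothesis. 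What this route establishes is (\ref{osab}), together with the same-column bound $|c_{i,j}|\le\mu_i|c_{j,j}|$ from Step 1. The same-row form needs column dominance, with $\mu_i$ replaced by the corresponding column ratio.
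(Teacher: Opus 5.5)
Your proposal is correct, including the claim that (\ref{os}) is false as printed. The paper gives no proof of this lemma; it only attributes it to Ostrowski \cite{O2}, and (\ref{os}) has the indices of Ostrowski's inequality transposed.

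Under strict row dominance the correct off-diagonal bound is the same-column one, $|c_{i,j}|\le\mu_i|c_{j,j}|$. That is exactly what your maximum-principle argument in Step~1 gives. Your $2\times 2$ counterexample ($a=10$, $b=1$, $c=0$, $d=2$) to the same-row version is valid. Even the weaker inequality $|c_{i,j}|\le|c_{i,i}|$ fails: the last example in Section~\ref{sec:examples} is strictly row (but not column) diagonally dominant and has $|c_{1,2}|=15.2938>11.2939=|c_{1,1}|$.

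Your Step~2 correctly proves the diagonal bounds (\ref{osab}). These are the only part of the lemma the paper ever uses, always for diagonal entries of inverses of principal submatrices, as in the proof of Theorem~\ref{theo:det1}. So the misprint does not propagate.

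One precision for Step~3: under strict column dominance, the argument on $yA=e_i^T$ gives $|c_{i,j}|\le\nu_j|c_{i,i}|$ with $\nu_j=\frac{1}{|a_{j,j}|}\sum_{k\neq j}|a_{k,j}|$. That is the ratio of column $j$, not of column $i$.
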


Note, that a similar  result  holds  for strictly column diagonally dominant matrices.

\section{Main Results}

We  start this  section  with some bounds for the entries of the inverses of acyclic matrices. First we need  some notations.

 Let $A_{[i,j]}$ be the submatrix of $A$ which is obtained by deleting the $i$-th  row and $j$-th column of $A$.  Let $A^{(i,k)}$, for $i < k$, be the principal submatrix of $A$ which is built by the $i, i+1, \ldots,  k$ rows and columns of $A$.  \\

\begin{lemma} \label{lem.det1}
 Let $A = [a_{i,j}] \in \bRn$ be an acyclic and irreducible matrix  with   
$G(A) = \Gamma$. For a fixed but arbitrary $i \in \{1, \ldots, n\}$  consider the predecessor $p(i)$  of $i > 1$. Let $\widetilde A$ be the matrix obtained from $A$ by permuting the rows and columns of $A$  such that the $i-1$-th row and column of  $\widetilde A$  is the $p(i)$-th row and column of  $A$.
For a fixed successor $s_l(i)$ of $i$ (if possible) let $\widehat A$ be the matrix obtained from $A$ by permuting the rows and columns of $A$  such that the $(i+1)$-th row and column of  $\widehat A$  is the $s_l(i)$-th row and column of  $A$.

Then
\beq \label{det-streicha}
det(\widetilde A_{[i,i-1]}) = (-1)^{i+i-1} \widetilde a_{i-1,i} det(\widetilde A^{(1,i-2})det(\widetilde A^{(i+1,n)}), 
\eeq
\beq \label{det-streich-4a}
det(\widehat  A_{[i+1,i]}) = (-1)^{i+i+1}  \widehat a_{i,i+1} det(\widehat A^{(1,i-1}) det(\widehat A^{(i+2,n)}).
\eeq

Moreover,
\beq \label{det-streich-6}
det(\widetilde  A_{[i,i]}) = det(\widetilde A^{(1,i-1}) det(\widetilde A^{(i+1,n)}),  
\eeq

\beq \label{det-streich-666}
det(\widehat  A_{[i,i]}) = det(\widehat A^{(1,i-1}) det(\widehat A^{(i+1,n)}).
\eeq
\end{lemma}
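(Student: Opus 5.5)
The plan is to reduce all four identities to the block structure that the permutations create, and then evaluate each determinant by a generalized Laplace expansion. Throughout I read the permutations in the intended way. For $\widetilde A$, the leading indices $1,\ldots,i-1$ are the vertices of the component of $\Gamma\setminus\{i\}$ containing $p(i)$, with $p(i)$ placed last at position $i-1$. Position $i$ is vertex $i$ itself, and positions $i+1,\ldots,n$ carry the branch of $\Gamma$ starting at $i$, minus $i$. The depth-first numbering makes this possible, since each branch occupies a contiguous block of labels. The same holds for $\widehat A$, with $s_l(i)$ moved to position $i+1$.

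The first step is the structural observation. $\Gamma$ is a tree, so vertex $i$ is a cut vertex. Its only neighbour among the first block is $p(i)$, and its neighbours among the last block are its successors. Hence
\[
\widetilde A=\left[\begin{array}{ccc} B & \widetilde a_{i-1,i}e_{i-1} & 0\\ \widetilde a_{i,i-1}e_{i-1}^T & \widetilde a_{i,i} & g^T\\ 0 & h & D\end{array}\right],
\qquad B=\widetilde A^{(1,i-1)},\quad D=\widetilde A^{(i+1,n)}.
\]
Here there are no entries coupling the first block and the last block. The analogous picture holds for $\widehat A$, where the coupling of $i$ into the last block sits in row and column $i+1$.

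With this structure, (\ref{det-streich-6}) and (\ref{det-streich-666}) follow at once. Deleting row $i$ and column $i$ leaves $\mathrm{diag}(B,D)$, whose determinant is $\det B\det D$.

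For (\ref{det-streicha}) I delete row $i$ and column $i-1$ and apply a Laplace expansion along the rows $1,\ldots,i-2$.
\begin{itemize}
\item These rows are supported only in the columns $1,\ldots,i-2$. The reason is that their only possible further nonzeros were in column $i-1$, which has been deleted, and none of these vertices is adjacent to $i$ or to the last block. So the expansion factors off $\det\widetilde A^{(1,i-2)}$.
\item The complementary block has rows $\{i-1,i+1,\ldots,n\}$ and columns $\{i,i+1,\ldots,n\}$. Its first row is $(\widetilde a_{i-1,i},0,\ldots,0)$, because $p(i)$ is adjacent to nothing in the last block. Its remaining rows are $[\,h\;\;D\,]$.
\item Expanding this block along its first row gives $\widetilde a_{i-1,i}\det D$.
\end{itemize}
The computation for (\ref{det-streich-4a}) is symmetric. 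Deleting row $i+1$ and column $i$, I expand along the rows $i+2,\ldots,n$ of $\widehat A$. These rows live in columns $i+1,\ldots,n$ only, since column $i$, the only other place they could touch, is deleted. This factors off $\det\widehat A^{(i+2,n)}$. What remains is $\det\widehat A^{(1,i-1)}$ bordered by the single entry $\widehat a_{i,i+1}$ in row $i$, and that gives the product.

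I expect the main obstacle to be the sign bookkeeping. The factor $(-1)^{i+(i-1)}$, respectively $(-1)^{i+(i+1)}$, appears only if one tracks where the surviving rows and columns sit inside the minor. Concretely, the identity is really a statement about the cofactor, or about the minor under the paper's indexing convention. I will fix this convention first, and then check the sign on the tridiagonal case $p(i)=i-1$, $s_l(i)=i+1$, where both sides can be computed by hand.

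A secondary point to verify is that the depth-first numbering really allows the permutation to keep each block contiguous. In particular, the subtree of $i$ must have labels forming an interval, so that the principal submatrices $A^{(\cdot,\cdot)}$ are the ones intended.
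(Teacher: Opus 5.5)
The gap is in your first step, the block form of $\widetilde A$ and $\widehat A$. It cannot be patched for the statement as written.

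You keep vertex $i$ at position $i$. You then put the component $K_0$ of $\Gamma\setminus\{i\}$ containing $p(i)$ into positions $1,\ldots,i-1$, and the branch $T_i$ at $i$, minus $i$, into positions $i+1,\ldots,n$. But $|K_0|=n-|T_i|$. The depth-first numbering gives $T_i=\{i,\ldots,i+|T_i|-1\}$, which is an interval but in general not $\{i,\ldots,n\}$.

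The vertices labelled after $T_i$ are later siblings of $i$ or of its ancestors, together with their branches. They lie in $K_0$ but do not fit into the first $i-1$ positions. So they must sit among positions $i+1,\ldots,n$, where they are adjacent to $p(i)$ or to vertices in positions $1,\ldots,i-2$. Hence the blocks you set to zero are not zero. Your setup exists only when $T_i=\{i,\ldots,n\}$, i.e.\ when $i$ lies on the path from $1$ to $n$, as in the tridiagonal case. The ``secondary point'' you postpone is exactly where the argument fails.

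In fact the identities themselves fail. Take $n=3$, the tree with edges $\{1,2\},\{1,3\}$ (a legitimate depth-first labelling), and $i=2$. Then $p(2)=1$, and with $\widetilde A=A$,
\[
\det(\widetilde A_{[2,2]})=a_{1,1}a_{3,3}-a_{1,3}a_{3,1}\neq a_{1,1}a_{3,3}=\det(\widetilde A^{(1,1)})\det(\widetilde A^{(3,3)}),
\]
since irreducibility forces $a_{1,3}a_{3,1}\neq 0$. The other admissible ordering fails in the same way, with $a_{1,2}a_{2,1}$. The paper's own Figure 2 example shows the same thing at $i=2$. There the lemma would give $|c_{1,2}|/|c_{2,2}|=|a_{1,2}|/|a_{1,1}|=5/11\approx 0.455$, whereas the displayed inverse gives $4.2733/8.8023\approx 0.485$.

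The paper's proof rests on the same false claim: that by the numbering there are no connections from $\{1,\ldots,i-2\}$ to $\{i-1,\ldots,n\}$. It handles edges from $p(i)$ to later vertices only in $(\ref{det-streicha})$, where they do drop out. It does not handle them in $(\ref{det-streich-6})$, where they do not.

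What is true is the following version, with $i$ replaced by $m=n-|T_i|+1$ in all four formulas. Place $K_0$, with $p(i)$ last, in positions $1,\ldots,m-1$. Put vertex $i$ at position $m$. Fill positions $m+1,\ldots,n$ with $T_i\setminus\{i\}$, with $s_l(i)$ first in the case of $\widehat A$.

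For that version, your arguments for $(\ref{det-streicha})$, $(\ref{det-streich-6})$ and $(\ref{det-streich-666})$ work as written. They give the minors with sign $+$, so the factors $(-1)^{2i\mp1}$ belong to the cofactors, as you suspected.

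Your argument for $(\ref{det-streich-4a})$ needs one more step. When $s_l(i)$ has successors, the rows after position $m+1$ also meet the column of $s_l(i)$. So the Laplace expansion along those rows has extra terms. These terms vanish: in each one, either the complementary factor has a zero column, or the $|T_{s_l(i)}|$ columns of the branch at $s_l(i)$ are supported on only $|T_{s_l(i)}|-1$ rows.
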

\begin{proof}
The cases $i = 2$ and $i = n-1$ are obvious. 
Next consider $(\ref{det-streicha})$.
 Then  we have
 \beqo
 \widetilde A_{[i,i-1]} = \left[ \begin{array}{ccc}
\widetilde A^{(1,i-2)} & 0 & 0 \\
x & \widetilde a_{(i-1,i)} & y \\
x  & x & \widetilde A^{(i+1,n)} 
\end{array} \right]. 
\eeqo
We have the upper zero block since there are no connections from a vertex $j \in \{1, \ldots i-2 \}$ to vertices in   $\{i-1, \ldots n\}$  due to the numbering of the vertices. In $y$ there might be entries which are different from zero, but only if vertex $p(i)$ has a successors different from $i$. Now, if we expand 
\beqo
 det \left(\left[ \begin{array}{cc}
\widetilde a_{(i-1,i)} & y \\
x & \widetilde A^{(i+1,n)} 
\end{array} \right] \right). 
\eeqo
along the first row, the entries different from zero in $y$ will  be multiplied with a determinant of a matrix which has a zero column.

Next  consider $(\ref{det-streich-4a})$. We have
 \beqo
 \widehat A_{[i+1,i]} = \left[ \begin{array}{ccc}
\widehat A^{(1,i-1)} & 0 & 0 \\
a & \widehat a_{(i,i+1)} & y \\
b  & c & \widehat A^{(i+2,n)} 
\end{array} \right]. 
\eeqo
Again the upper zero block appears since  there are no connections from a vertex $j \in \{1, \ldots i-1 \}$ to vertices in   $\{i+2, \ldots n\}$  due to the numbering of the vertices. In $c$ there might be entries which are different from zero, but only if vertex $s_l(i)$ has successors. Now, if we expand 
 \beqo
det \left( \left[ \begin{array}{cc}
\widehat a_{(i,i+1)} & y \\
c & \widehat A^{(i+2,n)} 
\end{array} \right] \right) 
\eeqo
along the first column, these entries in $c$ different from zero are  multiplied with a determinant of a matrix which has a zero column.
\end{proof}

We then obtain one of our main  results. 

\begin{theorem} \label{theo:det1}
 Let $A = [a_{i,j}] \in \bRn$ be an acyclic and irreducible strictly row and column diagonally dominant  matrix with   
$G(A) = \Gamma$ and   $A^{-1} = C = [c_{i,j}]$. For all $i \in \{1, \ldots, n\}$  consider the predecessor $p(i)$  of $i > 1$  and one fixed successor $s(i)$  of $i$ (if possible). 
Then
\beq \label{det-streich17a}
\frac{|c_{p(i),i}|}{ |c_{i,i}| } \geq \frac{|a_{p(i),i}|} { |a_{p(i),p(p(i))}| +  |a_{p(i),p(i)}| +   \sum_{l \in S_{p(i), l \neq i, l < p(i)}}  |a_{p(i),l}|} =: \rho_{p(i),i},
\eeq

\beq \label{det-streich17ad}
\frac{|c_{p(i),i}|}{ |c_{i,i}| } \leq \frac{|a_{p(i),i}|} { |a_{p(i),p(p(i))}| -  |a_{p(i),p(i)}| -   \sum_{l \in S_{p(i), l \neq i, l < p(i)}}  |a_{p(i),l}|} =: \tau_{p(i),i}.
\eeq
 
Moreover
\beq \label{det-streich17bc}
\frac{|c_{i,s(i)}|}{ |c_{i,i}| } \geq \frac{|a_{i,s_{(i)}}|} { |a_{s(i),s(i)}| +   \sum_{l \in S_{i}, l \neq s(i)}  |a_{l,s(i)}|} =: \sigma_{i,s(i)}
\eeq
and 
\beq \label{det-streich17bcd}
\frac{|c_{i,s(i)}|}{ |c_{i,i}| } \leq \frac{|a_{i,s_{(i)}}|} { |a_{s(i),s(i)}|  -   \sum_{l \in S_{i}, l \neq s(i)}  |a_{l,s(i)}|} =: \omega_{i,s(i)}.
\eeq
\end{theorem}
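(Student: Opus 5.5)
The plan is to avoid the determinant expansions of Lemma \ref{lem.det1} and instead combine a single scalar equation from $AC=I$ (or $CA=I$) with the treeangle property (Theorem \ref{kl2}) and Ostrowski's bound (Lemma \ref{wc}). By Proposition \ref{Nab01} all $c_{k,l}\neq 0$, so all quotients below make sense. Throughout, write $p=p(i)$ and $s=s(i)$.

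\textbf{Bounds (\ref{det-streich17a}) and (\ref{det-streich17ad}).} I would take column $i$ of $AC=I$ and read off row $p\neq i$:
\[
a_{p,p}c_{p,i}+a_{p,i}c_{i,i}+\sum_{l\in N(p),\,l\neq i} a_{p,l}c_{l,i}=0 .
\]
Here $N(p)$ denotes the neighbours of $p$ in $\Gamma$, namely $p(p)$ (if it exists) and the other successors of $p$. For each such $l$, the unique path from $l$ to $i$ passes through $p$, so $P_{l,p}\subseteq P_{l,i}$. The treeangle property then gives $c_{l,i}c_{p,p}=c_{l,p}c_{p,i}$, that is,
\[
|c_{l,i}|=\frac{|c_{l,p}|}{|c_{p,p}|}\,|c_{p,i}|\le |c_{p,i}| ,
\]
where the inequality is the column version of (\ref{os}), valid because $A$ is strictly column diagonally dominant. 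Solving the row-$p$ equation for $a_{p,i}c_{i,i}$ and applying the triangle inequality in both directions gives
\[
|c_{p,i}|\Bigl(|a_{p,p}|-\sum_{l\ne i}|a_{p,l}|\Bigr)\;\le\; |a_{p,i}|\,|c_{i,i}|\;\le\; |c_{p,i}|\Bigl(|a_{p,p}|+\sum_{l\ne i}|a_{p,l}|\Bigr).
\]
The left-hand factor is positive by strict row dominance of row $p$. This yields $\rho_{p(i),i}\le |c_{p,i}|/|c_{i,i}|\le\tau_{p(i),i}$. I read the denominators in the statement as $|a_{p,p}|\pm(\text{sum over all neighbours of } p \text{ other than } i)$, which includes $p(p)$; the displayed formulas seem to have a misplaced sign or index.

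\textbf{Bounds (\ref{det-streich17bc}) and (\ref{det-streich17bcd}).} I would argue symmetrically with $CA=I$, using row $i$ of $C$ against column $s$ of $A$:
\[
c_{i,s}a_{s,s}+c_{i,i}a_{i,s}+\sum_{l\in S_s} c_{i,l}a_{l,s}=0 ,
\]
where $S_s$ is the set of successors of $s$. For $l\in S_s$ the path from $i$ to $l$ passes through $s$. The treeangle property gives $c_{i,l}c_{s,s}=c_{i,s}c_{s,l}$, and row-dominance Ostrowski gives $|c_{s,l}|\le|c_{s,s}|$, hence $|c_{i,l}|\le|c_{i,s}|$. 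The triangle inequality then yields $|c_{i,s}|\bigl(|a_{s,s}|\mp\sum_{l}|a_{l,s}|\bigr)\lessgtr |a_{i,s}||c_{i,i}|$. Strict column dominance of column $s$ makes the smaller factor positive, and this gives the bounds $\sigma_{i,s(i)}$ and $\omega_{i,s(i)}$.

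\textbf{Main obstacle.} The delicate point is matching the index sets and the orientation $c_{p,i}$ versus $c_{i,p}$ to the right dominance (row or column). This is why both row and column dominance are assumed: one supplies the Ostrowski comparison $|c_{l,p}|\le|c_{p,p}|$ and the other the positivity of the denominator. Lemma \ref{lem.det1} offers an alternative route via the cofactor formula $c_{k,l}=(-1)^{k+l}\det A_{[l,k]}/\det A$, which turns each ratio into a ratio of principal minors. I would fall back on it only if the treeangle argument failed to produce the exact denominators stated.
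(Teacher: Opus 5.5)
Your proof is correct, and it takes a genuinely different route from the paper's. The paper does not use the treeangle property for this theorem. It permutes $p(i)$ into position $i-1$ (resp.\ $s(i)$ into position $i+1$) and uses the determinant factorizations of Lemma~\ref{lem.det1} to write each ratio as $|a_{p(i),i}|$ (resp.\ $|a_{i,s(i)}|$) times a diagonal entry of the inverse of a principal submatrix. It then applies Ostrowski's diagonal bound (\ref{osab}) to that submatrix.

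You instead take one scalar equation of $AC=I$ (resp.\ $CA=I$) and control the other neighbour terms with the treeangle identity plus the off-diagonal bound (\ref{os}). This needs no permutations, no numbering-dependent block structure and no determinants.

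Each route has its payoff:
\begin{itemize}
\item The paper's route, once repaired (see below), gives an exact identity $c_{p(i),i}/c_{i,i}=-a_{p(i),i}(M^{-1})_{p(i),p(i)}$ for a principal submatrix $M$. Sharper diagonal bounds can be fed into this identity, which is the refinement described after Theorem~\ref{zweib}.
\item Your route refines just as easily. If you keep the Ostrowski factor (correctly indexed, see below) instead of replacing it by $1$, you get $|c_{l,p}|\le\mu_l|c_{p,p}|$ with $\mu_l$ as in (\ref{muus}). The denominators then sharpen to $|a_{p,p}|\pm\sum_l\mu_l|a_{p,l}|$, where $l$ runs over the neighbours of $p$ other than $i$.
\end{itemize}

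Your reading of the first pair of denominators is not a cosmetic fix; it is necessary. The paper's sums keep only $p(p(i))$ and, at most, siblings of $i$ with smaller labels. That restriction comes from Lemma~\ref{lem.det1}, which presupposes that once $i$ is deleted, no vertex numbered below $i$ is adjacent to one numbered above $i$. This fails whenever $p(i)$ or another ancestor of $i$ has a child numbered above $i$.

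The correct $M$ is the principal submatrix on the complement of the branch at $i$. Its row $p(i)$ contains every neighbour of $p(i)$ except $i$, so (\ref{osab}) reproduces exactly your denominators. For a concrete check, take $A=\left[\begin{array}{rrr}10&1&1\\1&10&0\\1&0&10\end{array}\right]$ and $i=2$. The paper's argument gives $|c_{1,2}|/|c_{2,2}|=|a_{1,2}|/|a_{1,1}|=1/10$. The true value is $10/99$, which exceeds $\tau_{1,2}$ even after its sign is corrected. Your bounds give $1/11\le 10/99\le 1/9$, which hold.

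In (\ref{det-streich17bc})--(\ref{det-streich17bcd}), the literal set $S_i\setminus\{s(i)\}$ consists of siblings of $s(i)$, for which $a_{l,s(i)}=0$. You silently replaced it by the successors of $s(i)$. That is the correct set, but you should say so explicitly.

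Finally, your Ostrowski attributions are swapped, because they follow the printed indexing of (\ref{os}), which is off. Row dominance bounds each column of $C$ by its diagonal entry (run the maximum-modulus argument on $A\,Ce_p=e_p$). Column dominance bounds the rows. For example, $\left[\begin{array}{rr}1&0\\5&10\end{array}\right]$ is strictly row dominant, yet its inverse has $|c_{2,1}|=1/2>|c_{2,2}|=1/10$.

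So $|c_{l,p}|\le|c_{p,p}|$ comes from row dominance, and $|c_{s,l}|\le|c_{s,s}|$ from column dominance. Each pair of bounds therefore needs only one kind of dominance, not both as your closing remark says. Since both are assumed, your proof is unaffected.
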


\begin{proof}
 First consider $(\ref{det-streich17a})$. Let $\widetilde A$ be the matrix obtained from $A$ by permuting the rows and columns of $A$  such that the $(i-1)$-th row and column of  $\widetilde A$  is the $p(i)$-th row and column of  $A$.
 
 Note that due  to the numbering of the vertices we have  $p(i) < i$.  Moreover
\beqo 
\frac{|c_{p(i),i}|}{ |c_{i,i}| } &  =  & 
 \frac{|det(A_{[i,p(i)]})|}{ |det(A_{[i,i]})|} \\
 & = & 
|a_{p(i),i}|
\frac{|det(\widetilde A^{(1,i-2})|*|det(\widetilde A^{(i+1,n})| }{|det(\widetilde A^{(1,i-1)})|*|det(\widetilde A^{(i+1,n})|} \\
 & = & 
|a_{p(i),i}|
\frac{|det(\widetilde A^{(1,i-2})|}{|det(\widetilde A^{(1,i-1)})|} \\
& = &  |a_{p(i),i}| |(inv(\widetilde A^{(1,i-1)}))_{i-1,i-1}|.
\eeqo
But $ \widetilde A^{(1,i-1)})$ is a $p(i) \times p(i)$ strictly diagonally dominant matrix, thus using Lemma \ref{wc}, and using the original matrix $A$ again, we obtain for the last  row of $\widetilde A^{(1,i-1)}$ 
\beqo
|(inv(\widetilde A^{(1,i-1)}))_{i-1,i-1}| \geq 
\frac{1}{|a_{p(i),p(p(i))}| +  |a_{p(i),p(i)}| +   \sum_{l \in S_{p(i), l \leq p(i)}}  |a_{p(i),l}|} 
\eeqo
and
\beqo
|(inv(\widetilde A^{(1,i-1)}))_{i-1,i-1}| \leq 
\frac{1}{|a_{p(i),p(p(i))}| -  |a_{p(i),p(i)}| -  \sum_{l \in S_{p(i), l \leq p(i)}}  |a_{p(i),l}|}.
\eeqo
Thus
\beqo
\frac{|c_{p(i),i}|}{ |c_{i,i}| } \geq \frac{|a_{p(i),i}|} { |a_{p(i),p(p(i))}| +  |a_{p(i),p(i)}| +   \sum_{l \in S_{p(i), l \neq i, l < p(i)}}  |a_{p(i),l}|} =: \rho_{p(i),i} 
\eeqo
and 
\beqo
\frac{|c_{p(i),i}|}{ |c_{i,i}| } \leq \frac{|a_{p(i),i}|} { |a_{p(i),p(p(i))}| -  |a_{p(i),p(i)}| -   \sum_{l \in S_{p(i), l \neq i, l < p(i)}}  |a_{p(i),l}|} =: \tau_{p(i),i}. 
\eeqo

 Next consider (\ref{det-streich17bc}). Let $\widehat A$ be the matrix obtained from $A$ by permuting the rows and columns of $A$  such that the $i+1$-th row and column of  $\widehat A$  is the $s(i)$ -th row and column of  $A$.
 Here we choose one particular successor of $i$ and denote it by $s(i)$. Note, that  there might be more than one. We then  have
\beqo
\frac{|c_{i,s(i)}|}{ |c_{i,i}| } &  =  & 
 \frac{|det(A_{[s(i),i]})|}{ |det(A_{[i,i]})|} \\
 & = & 
|a_{i,s(i)}|
\frac{|det(\widehat A^{(1,i -1})|*|det(\widehat A^{(i+2,n)})|}
{|det(\widehat A^{(1,i -1})|   *  |det(\widehat A^{(i+1,n)})|}                    \\
& = & |a_{i,s(i)}|
\frac{|det(\widehat A^{(i+2,n)})|}{|det(\widehat A^{(i+1,n)})|}\\
& = &  |a_{i,s(i)}| |(inv(\widehat A^{(i + 1,n)}))_{1,1}|
\eeqo
But $ \widehat A^{(i+1,n)})$ is a strictly diagonally dominant matrix, thus using Ostrowski's result, i.e. Lemma \ref{wc},   and using the original matrix $A$ again, we obatin for the first   row of $\widehat A^{(i+1,n)}$ 
\beqo
|(inv(\widehat A^{(i+1,n)}))_{1,1}| \geq 
\frac{1}{|a_{i,s(i)}| +  |a_{s(i),s(i)}| +   \sum_{l \in S_{i}, l \neq s(i) }|a_{l,s(i)}|}
\eeqo
and
\beqo
|(inv(\widehat A^{(i+1,n)}))_{1,1}| \leq 
\frac{1}{|a_{i,s(i)}| -  |a_{s(i),s(i)}| -  \sum_{l \in S_{i}, l \neq s(i) }|a_{l,s(i)}|}
\eeqo

Thus
\beqo
\frac{|c_{i,s(i)}|}{ |c_{i,i}| } \geq \frac{|a_{i,s(i)}|} { |a_{s(i),s(i)}| +   \sum_{l \in S_{i}, l \neq s(i)}  |a_{l,s(i)}|} =: \sigma_{i,s(i)}
\eeqo
and
\beqo
\frac{|c_{i,s(i)}|}{ |c_{i,i}| } \leq \frac{|a_{i,s(i)}|} { |a_{s(i),s(i)}| -   \sum_{l \in S_{i}, l \neq s(i)}  |a_{l,s(i)}|} =: \omega_{i,s(i)}, 
\eeqo

which  completes the proof. 
\end{proof}

Similarly we obtain the next Theorem. 

\begin{theorem} \label{theo:det2}
 Let $A = [a_{i,j}] \in \bRn$ be an acyclic and irreducible strictly row and column diagonally dominant  matrix with   
$G(A) = \Gamma$ and   $A^{-1} = C = [c_{i,j}]$. For all $i \in \{1, \ldots, n\}$  consider the predecessor $p(i)$  of $i > 1$  and one fixed successor $s(i)$  of $i$ (if possible). 
Then
\beq \label{det-streich19a}
\frac{|c_{i,p(i)}|}{ |c_{i,i}| } \geq \frac{|a_{i,p(i)}|} { |a_{p(p(i)),p(i)}| +  |a_{p(i),p(i)}| +   \sum_{l \in S_{p(i), l < i}}  |a_{l,p(i)}|} =: \widetilde \rho_{i,p(i)},
\eeq

\beq \label{det-streich19ad}
\frac{|c_{i,p(i)}|}{ |c_{i,i}| } \leq \frac{|a_{i,p(i)}|} { |a_{p(p(i)),p(i)}| -  |a_{p(i),p(i)}| -   \sum_{l \in S_{p(i),  l < i}}  |a_{l,p(i)}|} =: \widetilde \tau_{i,p(i)}.
\eeq
 
Moreover
\beq \label{det-streich19bc}
\frac{|c_{s(i),i}|}{ |c_{i,i}| } \geq \frac{|a_{s_{(i),i}}|} { |a_{s(i),s(i)}| +   \sum_{l \in S_{i}, l \neq s(i)}  |a_{s(i),l}|} =: \widetilde \sigma_{s(i),i}
\eeq
and 
\beq \label{det-streich19bcd}
\frac{|c_{s(i),i}|}{ |c_{i,i}| } \leq \frac{|a_{s_{(i),i}}|} { |a_{s(i),s(i)}|  -   \sum_{l \in S_{i}, l \neq s(i)}  |a_{s(i),l}|} =: \widetilde \omega_{s(i),i}.
\eeq
\end{theorem}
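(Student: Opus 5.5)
The plan is to reduce Theorem \ref{theo:det2} to Theorem \ref{theo:det1} by transposition, and to check the reduction against the direct cofactor computation in the proof of Theorem \ref{theo:det1}. The matrix $A^T$ is again acyclic and irreducible, with $G(A^T)=G(A)=\Gamma$, so the dfs numbering, the predecessors $p(i)$ and the successors $s(i)$ do not change. Since $A$ is strictly diagonally dominant by rows and by columns, so is $A^T$. Also $(A^T)^{-1}=C^T$, so $(C^T)_{p(i),i}=c_{i,p(i)}$ and $(C^T)_{i,s(i)}=c_{s(i),i}$, while the diagonal entries are unchanged. Applying (\ref{det-streich17a})--(\ref{det-streich17bcd}) to $A^T$ and replacing every entry $(A^T)_{k,l}$ by $a_{l,k}$ then gives exactly (\ref{det-streich19a})--(\ref{det-streich19bcd}). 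In the denominators the row sums of $A^T$ become column sums of $A$, which explains the terms $|a_{p(p(i)),p(i)}|$, $|a_{l,p(i)}|$ and $|a_{s(i),l}|$.

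To make the reduction transparent I will also redo the key step directly. By the cofactor formula,
$$|c_{i,p(i)}|/|c_{i,i}| = |\det(A_{[p(i),i]})|/|\det(A_{[i,i]})|.$$
I will use the same permuted matrix $\widetilde A$ as before, in which $p(i)$ sits in position $i-1$. The analogue of (\ref{det-streicha}) for the transposed deletion $\widetilde A_{[i-1,i]}$ follows by applying Lemma \ref{lem.det1} to $\widetilde A^T$, because deleting row $i-1$ and column $i$ of $\widetilde A$ corresponds to $(\widetilde A^T)_{[i,i-1]}$ and determinants are transposition invariant. The factor $\det(\widetilde A^{(i+1,n)})$ cancels against (\ref{det-streich-6}). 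This leaves
$$|c_{i,p(i)}|/|c_{i,i}| = |a_{i,p(i)}|\,|(\widetilde A^{(1,i-1)})^{-1}_{i-1,i-1}|.$$
The successor case is handled in the same way with $\widehat A$, (\ref{det-streich-4a}) and (\ref{det-streich-666}), and gives
$$|c_{s(i),i}|/|c_{i,i}| = |a_{s(i),i}|\,|(\widehat A^{(i+1,n)})^{-1}_{1,1}|.$$

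Finally, I bound the diagonal entry of the inverse of the principal submatrix with the column version of Ostrowski's Lemma \ref{wc}. Principal submatrices of a strictly column diagonally dominant matrix remain strictly column diagonally dominant, so the lemma applies. It gives the two-sided bound on the diagonal inverse entry with $\mu$ taken from the relevant column of the submatrix, that is, column $p(i)$ restricted to the vertices preceding $i$, or column $s(i)$ restricted to its own branch. Taking reciprocals yields the stated $\widetilde\rho,\widetilde\tau,\widetilde\sigma,\widetilde\omega$.

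The main obstacle is the index bookkeeping. I need to identify exactly which off-diagonal entries of column $p(i)$, respectively $s(i)$, survive in the submatrix $\widetilde A^{(1,i-1)}$ or $\widehat A^{(i+1,n)}$ under the dfs numbering, and match them with the index sets written in the statement. I will settle this by listing the neighbours of $p(i)$ (its predecessor and its successors numbered below $i$) and of $s(i)$ (only its own successors), exactly as in the proof of Theorem \ref{theo:det1}. Here I will take the statement's denominators as the intended form of Ostrowski's bound $|a_{kk}|(1\pm\mu_k)$.
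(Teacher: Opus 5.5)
\paragraph{Where the proof breaks.}
Your transposition reduction is exactly the paper's proof, which is the single sentence that everything follows from Theorem~\ref{theo:det1} applied to $A^T$. Your direct recomputation, however, exposes a genuine error that this reduction inherits from Lemma~\ref{lem.det1} and Theorem~\ref{theo:det1}.

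The factorization (\ref{det-streich-6}), and likewise (\ref{det-streicha}), needs $A_{[i,i]}$ to split into blocks on $\{1,\ldots,i-1\}$ and $\{i+1,\ldots,n\}$. That means no edge of $\Gamma$ may join these two sets. In dfs order this fails as soon as $p(i)$, or any other ancestor of $i$, has a child numbered after the branch of $i$. Consequently your identity $|c_{i,p(i)}|/|c_{i,i}|=|a_{i,p(i)}|\,|(\widetilde A^{(1,i-1)})^{-1}_{i-1,i-1}|$ is false in general.

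The paper's own Figure~2 example already shows this. For $i=2$ your identity predicts $|a_{2,1}|/|a_{1,1}|=1/11\approx 0.0909$. The printed inverse gives $|c_{2,1}|/|c_{2,2}|=0.8547/8.8023\approx 0.0971$. The branch $\{4,5,6\}$ hanging at vertex $1$ is invisible in $\widetilde A^{(1,1)}=[a_{1,1}]$.

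For this $i$ there is no $p(p(i))$ and no $l\in S_1$ with $l<2$. So once the sign in (\ref{det-streich19ad}) is corrected as you propose, its right-hand side is also $1/11$, and the bound is violated. No index bookkeeping can therefore prove the statement with the restriction $l<i$.

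Taken literally, (\ref{det-streich19bc})--(\ref{det-streich19bcd}) fail too. Siblings of $s(i)$ are not adjacent to $s(i)$, so the sums vanish and equality $|c_{s(i),i}|/|c_{i,i}|=|a_{s(i),i}|/|a_{s(i),s(i)}|$ is forced. For example, $i=1$, $s(i)=2$ gives $0.0844\neq 1/12$. Your reading of this sum as running over the successors of $s(i)$ is the right one.

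\paragraph{The repair.}
The fix avoids Lemma~\ref{lem.det1} entirely. Let $W=V\setminus\mathrm{branch}(i)$ be the component of $\Gamma-i$ containing $p(i)$. The only edge leaving $W$ is $\{p(i),i\}$. Hence the entries $(CA)_{i,j}=0$ with $j\in W$ read $\sum_{k\in W}c_{i,k}a_{k,j}=-c_{i,i}\,a_{i,p(i)}\,\delta_{j,p(i)}$, so
\[
c_{i,p(i)}=-a_{i,p(i)}\,c_{i,i}\,\bigl(A[W]^{-1}\bigr)_{p(i),p(i)} .
\]
In the same way, the entries $(AC)_{j,i}=0$ with $j\in B=\mathrm{branch}(s(i))$ give $c_{s(i),i}=-a_{s(i),i}\,c_{i,i}\,\bigl(A[B]^{-1}\bigr)_{s(i),s(i)}$. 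Your successor identity is therefore correct. The reason is that $B$ is a connected component of the subgraph induced on $\{i+1,\ldots,n\}$, not (\ref{det-streich-666}).

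Now apply Lemma~\ref{wc} to the strictly diagonally dominant principal submatrices $A[W]$ and $A[B]$.
\begin{itemize}
\item For $A[W]$, use the column version. The off-diagonal sum consists of $|a_{p(p(i)),p(i)}|$ and $|a_{l,p(i)}|$ for \emph{all} $l\in S_{p(i)}\setminus\{i\}$, not only $l<i$.
\item For $A[B]$, use the row version, not the column version you announced, since the latter yields $|a_{l,s(i)}|$ instead of the stated $|a_{s(i),l}|$. The sum is $\sum_{l\in S_{s(i)}}|a_{s(i),l}|$.
\end{itemize}

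With these index sets, and with each upper-bound denominator equal to $|a_{kk}|$ minus the off-diagonal sum, the statement holds. So does Theorem~\ref{theo:det1}, and each follows from the other via $A^T$ as you intended.
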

\begin{proof}
 The statements follow from Theorem \ref{theo:det1} by considering $A^T$. 
\end{proof}

Similar bounds as above  can be obtained by using a result  proved by Klein \cite{Kle82}.  Let $\Delta_{[i,k]}$, $ i \leq k$,  be the  determinant of the principal submatrix of $A$ obtained by deleting the row and columns $i, i+1, \ldots, k$. Then from Theorem 1 in \cite{Kle82} it follows for two vertices $i$ and $k$ which are connected
\beqo
\frac{|c_{i,k}|}{ |c_{i,i}| }  = |a_{i,k}|  \left|\frac{\Delta_{[i,k]}}{\Delta_{[i,i]}}\right|.
\eeqo
But  then  the term   
\beq  \label{eq:frac}
\frac{|\Delta_{[i,k]}|}{|\Delta_{[i,i]|}}
\eeq
has to be considered. Note, if $i$ is connected to $k$, it does not mean that $k = i+1$ or $k = i-1$. Thus, as in our proof  techniques, you have to permute rows and columns of $A$ such that $k = i+1$ or $k = i-1$ holds. Then (\ref{eq:frac}) is again nothing else than the absolute value of a diagonal entry of the  inverse of a permuted submatrix of $A$ which can be bounded  by Lemma \ref{wc}. 
Thus, again,  the numbering  of the vertices of the tree is also a major issue. Hence, similar  to our Theorems above, different cases have to be studied.
Moreover, the inverses we used in the proofs of Theorems \ref{theo:det1} and \ref{theo:det2} are smaller which leads to sharper bounds. \\

 Now, let 
$P_{i,j} =\{(k_1,k_2), (k_2, k_3), \ldots (k_{r-1},k_r)\}$, where $k_1 = i$ and
$k_r = j$, be the unique path from $i$ to $j$. As mentioned in \cite{Kle82} 
the treeangle property implies 
\beq \label{path-n}
c_{i,j} & = & c_{k_1,k_2} \prod_{s=2}^{r-1} \frac{c_{k_s,k_{s+1}}}{c_{k_s,k_s}}.
\eeq
Then we obtain  
\beq \label{eq:path}
c_{i,j} = c_{i,i}\prod_{s=1}^{r-1} \frac{c_{k_s,k_{s+1}}}{c_{k_s,k_s}},
\eeq
but this is an equality. So we have for all indices $k_t$ of the path 
\beqo
c_{i,k_{t}} = c_{i,i}\prod_{s=1}^{t-1} \frac{c_{k_s,k_{s+1}}}{c_{k_s,k_s}}
\eeqo
and 
\beq \label{eq:prod}
c_{i,k_{t+1}} = c_{i,i}\prod_{s=1}^{t} \frac{c_{k_s,k_{s+1}}}{c_{k_s,k_s}} = c_{i,k_t} \frac{c_{k_t,k_{t+1}}}{c_{k_{t}, k_{t}}}.
\eeq

Note, in the product  above, i.e. in the  path from vertex $i$ to vertex $j$ , it is not clear, if vertex $k_{s+1}$  
is a successor or predecessor of vertex $k_s$ in the underlying tree. Here  the indices $k_s$ are just the numbers of the vertices  along the path. Of course   we cannot say if $k_{s+1}$ is less or larger  than $k_{s}$.  However, in Theorem \ref{theo:det1}  we consider   predecessor of a vertex, which have a lower number then the vertex and successors of a vertex, which have  a higher number than  the vertex.  

Therefore,  we define for two indices $t,l$ with $t \neq l$
\beq \label{alpha-ij}
\alpha_{t,l} = \left\{
\begin{array}{ll}
\rho_{t,l} & \quad t < l, \\
\sigma_{t,l} & \quad t > l 
\end{array}
\right.
\eeq
and 
\beq \label{gamma-ij}
\gamma_{t,l} = \left\{
\begin{array}{ll}
\tau_{t,l} & \quad t < l, \\
\omega_{t,l} & \quad t > l. 
\end{array}
\right. 
\eeq



We then can prove  the following Theorem. 

\begin{theorem} \label{theo-bounds-path}
 Let $A = [a_{i,j}] \in \bRn$ be acyclic, irreducible and strictly row and column diagonally dominant matrix with   
$G(A) = \Gamma$ and   $A^{-1} = C = [c_{i,j}]$. Now, let 
$P_{i,j} =\{(k_1,k_2), (k_2, k_3), \ldots (k_{r-1},k_r)\}$, where $k_1 = i$ and
$k_r = j$, be the unique path from $i$ to $j$  with $i \neq j$.
Then
\beq \label{eq:lower}
  |c_{i,i}| \prod_{s=1}^{r-1} \alpha_{k_s,k_{s+1}} \leq |c_{i,j}| \leq  |c_{i,i}| \prod_{s=1}^{r-1} \gamma_{k_s,k_{s+1}}.
\eeq
\end{theorem}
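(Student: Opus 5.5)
The plan is to combine the exact multiplicative path formula (\ref{eq:path}) with the one-step ratio bounds of Theorem \ref{theo:det1}. Since $A$ is strictly diagonally dominant, Ostrowski's Lemma \ref{wc} gives $c_{i,i}\neq 0$ for all $i$. Proposition \ref{Nab01} then gives $c_{i,j}\neq 0$ for all $i,j$, so every quotient in (\ref{eq:path}) is well defined. By Theorem \ref{kl2}, $C$ satisfies the treeangle property with respect to $\Gamma$. Hence (\ref{eq:path}) holds along the unique path $P_{i,j}$, and taking absolute values gives
\[
|c_{i,j}| = |c_{i,i}| \prod_{s=1}^{r-1} \frac{|c_{k_s,k_{s+1}}|}{|c_{k_s,k_s}|}.
\]
Since every factor is positive, (\ref{eq:lower}) follows once I show, for each edge $(k_s,k_{s+1})$ of the path,
\[
\alpha_{k_s,k_{s+1}} \le \frac{|c_{k_s,k_{s+1}}|}{|c_{k_s,k_s}|} \le \gamma_{k_s,k_{s+1}}.
\]
I will verify this edgewise and then multiply the $r-1$ inequalities. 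Alternatively, I can induct on $t$ using the recursion (\ref{eq:prod}).

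For a single edge $(t,l)=(k_s,k_{s+1})$, the vertices are adjacent in $\Gamma$. With the dfs numbering of $\Gamma_{(0)}$, either $t=p(l)$ (so $t<l$) or $l=p(t)$, i.e.\ $l=s(t)$ for one of the successors of $t$ (so $t<l$ again in the dfs order). The case distinction in (\ref{alpha-ij})--(\ref{gamma-ij}) is meant to select the matching estimate from Theorem \ref{theo:det1}. In the first situation I would apply (\ref{det-streich17a}) and (\ref{det-streich17ad}) with $i=l$, which bound $|c_{p(l),l}|$ relative to a diagonal entry by $\rho_{p(l),l}$ and $\tau_{p(l),l}$. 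In the second I would apply (\ref{det-streich17bc}) and (\ref{det-streich17bcd}), which bound $|c_{t,s(t)}|/|c_{t,t}|$ by $\sigma$ and $\omega$. If an edge is traversed against the orientation of Theorem \ref{theo:det1}, i.e.\ we need $c_{l,p(l)}$ or $c_{s(t),t}$, I would take the corresponding bounds from Theorem \ref{theo:det2} (the transposed version) instead. This requires the usual convention that $\alpha,\gamma$ are read with the appropriate tilde quantities.

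The main obstacle is a mismatch of denominators. Theorem \ref{theo:det1} bounds $|c_{p(i),i}|/|c_{i,i}|$, with the diagonal entry at the \emph{later} vertex $i$. The path factor, however, is $|c_{k_s,k_{s+1}}|/|c_{k_s,k_s}|$, with the diagonal entry at the \emph{first} vertex $k_s$. When $k_s=p(k_{s+1})$ these do not match directly. To fix this I would rewrite (\ref{eq:path}) by grouping the factors differently, using the treeangle identity $c_{i,j}c_{k,k}=c_{i,k}c_{k,j}$ to move the diagonal normalization to the other endpoint of each edge. Concretely, one may also write $|c_{i,j}| = |c_{j,j}|\prod |c_{k_s,k_{s+1}}|/|c_{k_{s+1},k_{s+1}}|$. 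I would also rederive the needed ratio directly via the determinant identity in the proof of Theorem \ref{theo:det1}: $|c_{t,l}|/|c_{t,t}| = |a_{t,l}|\,|\det(\cdot)|/|\det(\cdot)|$, which equals $|a_{t,l}|$ times a diagonal entry of the inverse of a diagonally dominant principal submatrix. That diagonal entry is then bounded by Lemma \ref{wc}. I expect this bookkeeping, matching each edge orientation and denominator to the right one of $\rho,\tau,\sigma,\omega$, to be the delicate part. Once it is settled, multiplying the edgewise bounds is routine.
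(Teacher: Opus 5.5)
Your skeleton is the paper's own proof. The paper's argument is the single remark that (\ref{eq:path}) together with Theorem \ref{theo:det1}, applied factor by factor, yields (\ref{eq:lower}). Your preliminary steps (Ostrowski for $c_{i,i}\neq 0$, Proposition \ref{Nab01}, Theorem \ref{kl2}) are fine. The gap is the step you defer as ``bookkeeping'', and it is more than delicate.

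First, your case split is garbled. The conditions $t=p(l)$ and $l=s(t)$ describe the same case, a downward edge with $t<l$. The other case is $l=p(t)$, i.e.\ $t=s(l)$, with $t>l$. With that fixed, there is no denominator mismatch in the $c_{i,i}$-normalized product. For a downward edge, the factor $|c_{t,s(t)}|/|c_{t,t}|$ is exactly the ratio in (\ref{det-streich17bc})--(\ref{det-streich17bcd}), so it lies in $[\sigma_{t,l},\omega_{t,l}]$. For an upward edge, the factor $|c_{t,p(t)}|/|c_{t,t}|$ is exactly the ratio in (\ref{det-streich19a})--(\ref{det-streich19ad}), so it lies in $[\widetilde\rho_{t,l},\widetilde\tau_{t,l}]$. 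Multiplying gives (\ref{eq:lower}) with $\alpha_{t,l}=\sigma_{t,l}$, $\gamma_{t,l}=\omega_{t,l}$ for $t<l$, and $\alpha_{t,l}=\widetilde\rho_{t,l}$, $\gamma_{t,l}=\widetilde\tau_{t,l}$ for $t>l$.

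That is not what (\ref{alpha-ij})--(\ref{gamma-ij}) prescribe. They attach $\rho_{t,l},\tau_{t,l}$ to $t<l$, and these bound $|c_{p(l),l}|/|c_{l,l}|$, not the path factor $|c_{p(l),l}|/|c_{p(l),p(l)}|$. No regrouping can repair this with $|c_{i,i}|$ in front, because the inequality as literally stated fails. Take the tridiagonal example of Section \ref{sec:examples} with $i=1$, $j=2$. Then $\alpha_{1,2}=\rho_{1,2}=|a_{1,2}|/|a_{1,1}|=1/11$; indeed $c_{1,2}=c_{2,2}/11$ exactly, from the first row of $AC=I$. So the claimed lower bound is $|c_{1,1}|/11\approx 0.833$, whereas $|c_{1,2}|=0.7848$. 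The paper's one-line proof passes over exactly this orientation issue.

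Your regrouped identity $c_{i,j}=c_{j,j}\prod_{s=1}^{r-1}c_{k_s,k_{s+1}}/c_{k_{s+1},k_{s+1}}$ is correct, and it is the version that fits the paper's $\alpha,\gamma$. Its factor is $|c_{p(l),l}|/|c_{l,l}|\in[\rho_{t,l},\tau_{t,l}]$ for $t<l$, and $|c_{s(l),l}|/|c_{l,l}|\in[\widetilde\sigma_{t,l},\widetilde\omega_{t,l}]$ for $t>l$. (Reading $\sigma_{t,l},\omega_{t,l}$ with $t>l$ as the tilde quantities is the only meaningful interpretation.) But this version bounds $|c_{i,j}|$ through $|c_{j,j}|$, not $|c_{i,i}|$.

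So you must commit to one of these two consistent pairings and state the result accordingly. As written, your proposal leaves the decisive matching open, and for (\ref{eq:lower}) verbatim it cannot be closed.
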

\begin{proof}
 Consider  equation (\ref{eq:path}). Using the  definition (\ref{alpha-ij}) of the $\alpha$ and (\ref{gamma-ij}) of the $\gamma$ then Theorem   \ref{theo:det1} leads to the desired   result.
\end{proof}

The above  theorem  generalizes Theorem 3.3 in \cite{Nab99a}. 
If the graph  is just a line - i.e. for tridiagonal matrices, then for $i < j$ all the $\gamma_{k_s,k_{s+1}}$ in (\ref{eq:lower}) are $ \sigma_{k_s,k_{s+1}}$. While for $i > j$ all the $ \gamma_{k_s,k_{s+1}}$ in (\ref{eq:lower}) are $\rho_{k_s,k_{s+1}}$. The same holds for the $\alpha_{k_s,k_{s+1}}$.

Note using the numbering we have used for the vertices of the tree, the sequences of the $\gamma_{k_s,k_{s+1}}$ switches only once from  some $\rho_{k_l,k_{l+1}}$ to the next $\sigma_{k_{l+1},k_{l+2}}$ or vice versa. The same  holds  for the sequence of the $\alpha_{k_s,k_{s+1}}$.

A similar Theorem as Theorem \ref{theo-bounds-path} can be proven for  the same path, but then the bounds for $|c_{i,j}|$ are related to the diagonal entry $c_{j,j}$ (not $c_{i,i}$) and a product which then  uses
$\tilde \alpha$ and $\tilde \gamma$  defined by
\beq \label{talpha-ij}
\tilde \alpha_{t,l} = \left\{
\begin{array}{ll}
\tilde \rho_{t,l} & \quad t < l, \\
\tilde \sigma_{t,l} & \quad t > l 
\end{array}
\right.
\eeq
and 
\beq \label{tgamma-ij}
\tilde \gamma_{t,l} = \left\{
\begin{array}{ll}
\tilde \tau_{t,l} & \quad t < l, \\
\tilde \gamma_{t,l} & \quad t > l. 
\end{array}
\right. 
\eeq
for  two indices $t,l$ with $t \neq l$. 

Observe that 
 due to the strict row and column diagonal dominance all the 
 $\rho, \tau, \gamma, \sigma$ and  $\tilde \rho, \tilde \sigma, \tilde \tau, \tilde \gamma$ are less than one.



Next we give upper and lower bounds for the diagonal entries.

\begin{theorem} \label{zweib}
 Let $A = [a_{i,j}] \in \bRn$ be acyclic, irreducible and strictly row and column  diagonally dominant.
 Let $A^{-1} = C = [c_{i,j}]$. 
\beqo \label{diabscha}
\frac{1}{|a_{i,i}| +  \tau_{i,p(i)} |a_{i,p(i)}| +  \sum_{s \in S_i} \tilde \omega_{i,s} |a_{i,s}|} 
\leq  |c_{i,i}| \leq \frac{1}{|a_{i,i}| -  \tau_{i,p(i)} |a_{i,p(i)}| -  \sum_{s \in S_i} \tilde \omega_{s,i} |a_{i,s}|}
\eeqo
provided that the denominator of the upper  bound of 
is not zero.
\end{theorem}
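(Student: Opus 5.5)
The plan is to start from the $i$-th row of the identity $AC=I$, read off at the diagonal position $(i,i)$. Since $A$ is acyclic, the only nonzero off-diagonal entries in row $i$ of $A$ are $a_{i,p(i)}$ (when $i>1$) and the $a_{i,s}$ with $s \in S_i$, the successors of $i$. This gives
\[
1 = a_{i,i}c_{i,i} + a_{i,p(i)}c_{p(i),i} + \sum_{s\in S_i} a_{i,s} c_{s,i}.
\]
We then divide by $c_{i,i}$, which is nonzero by Lemma \ref{wc} or Proposition \ref{Nab01}. The result is
\[
\frac{1}{|c_{i,i}|} = \Bigl| a_{i,i} + a_{i,p(i)}\frac{c_{p(i),i}}{c_{i,i}} + \sum_{s\in S_i} a_{i,s}\frac{c_{s,i}}{c_{i,i}}\Bigr| .
\]

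Next we apply the triangle inequality in both directions. The upper estimate reads
\[
\frac{1}{|c_{i,i}|} \le |a_{i,i}| + |a_{i,p(i)}|\frac{|c_{p(i),i}|}{|c_{i,i}|} + \sum_{s}|a_{i,s}|\frac{|c_{s,i}|}{|c_{i,i}|}.
\]
The reverse triangle inequality gives the same expression with minus signs. Inverting these produces, respectively, the lower and the upper bound for $|c_{i,i}|$.

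It then remains to bound the ratios from above. For the predecessor term, Theorem \ref{theo:det1}, inequality (\ref{det-streich17ad}), gives $|c_{p(i),i}|/|c_{i,i}| \le \tau_{p(i),i}$; this is the $\tau$-quantity written $\tau_{i,p(i)}$ in the statement. For the successor terms, Theorem \ref{theo:det2}, inequality (\ref{det-streich19bcd}), gives $|c_{s,i}|/|c_{i,i}| \le \tilde\omega_{s,i}$ for every successor $s$. These must be applied to each $s\in S_i$ separately. That is allowed, since Theorems \ref{theo:det1} and \ref{theo:det2} hold for any fixed choice of successor. Substituting these bounds yields both inequalities of the statement. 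When $i=1$ the predecessor term is simply absent, and when $i$ is a leaf the sum is empty.

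The main point of care is the upper bound. We need the reverse triangle inequality to produce a positive quantity before inverting. Strict diagonal dominance together with $\tau,\tilde\omega<1$ gives
\[
|a_{i,i}| - \tau|a_{i,p(i)}| - \sum_s \tilde\omega_{s,i}|a_{i,s}| > 0,
\]
which covers the hypothesis that this denominator is nonzero. A further subtlety is that Theorem \ref{theo:det1} is proved only after permuting rows and columns so that $p(i)$ or $s$ sits adjacent to $i$. We must therefore check that the quantities $\tau$ and $\tilde\omega$ are indexed consistently with the statement; the statement writes $\tilde\omega_{i,s}$ in the lower bound and $\tilde\omega_{s,i}$ in the upper bound, and these should be read as the same quantity $\tilde\omega_{s,i}$.
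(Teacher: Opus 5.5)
Your proposal is correct and follows the paper's proof essentially verbatim. Both take the $(i,i)$ entry of $AC=I$, apply the triangle inequality in both directions, and bound $|c_{p(i),i}|/|c_{i,i}|$ by $\tau_{p(i),i}$ from Theorem \ref{theo:det1} and each $|c_{s,i}|/|c_{i,i}|$ by $\tilde\omega_{s,i}$ from Theorem \ref{theo:det2}; your reading of the statement's $\tau_{i,p(i)}$ and $\tilde\omega_{i,s}$ matches what the paper's proof uses. Your check that the upper-bound denominator is positive, not merely nonzero, is a welcome addition. It follows from strict row dominance of row $i$ together with the paper's remark that $\tau,\tilde\omega<1$, and it makes explicit a step the paper leaves implicit.
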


\begin{proof}
Since $AA^{-1} = I$ we have for for $i$
\beqo
a_{i,p(i)}c_{p(i),i} + a_{i,i}c_{i,i} + \sum_{s \in S_i}a_{i,s} c_{s,i} = 1.
\eeqo
Thus
\beqo
|a_{i,p(i)}c_{p(i),i}| + \sum_{s \in S_i}|a_{i,s} c_{s,i}| \geq | 1 -  a_{i,i}c_{i,i}|.
\eeqo
Hence 
\beqo
| 1 -  a_{i,i}c_{i,i}| \leq \left( |a_{i,p(i)}|\frac{c_{p(i),i}}{|c_{i,i}|} + \sum_{s \in S_i}|a_{i,s}| \frac{c_{s,i}}{|c_{i,i}|} \right) |c_{i,i}|.
\eeqo
Using Theorem \ref{theo:det1} and  Theorem \ref{theo:det2} we obtain 
\beqo
|1 - a_{i,i}c_{i,i}| \leq (\tau_{i,p(i)} |a_{i,p(i)}| +  \sum_{s \in S_i} \tilde \omega_{s,i} |a_{i,s}|) |c_{i,i}|.
\eeqo
Hence we get the desired bounds.
\end{proof}

Note, in our Theorems above Ostrowski's bound for the diagonal entries (Lemma \ref{wc}) was used to get  bounds for the off-diagonal entries of certain submatrices. The bound is 
\[
|c_{i,i}| \leq \frac{1}{|a_{i,i}|(1 - \mu_i)} = \frac{1}{ |a_{i,i}| - \sum_{j\neq i}|a_{i,j}|}.
\]
But Theorem \ref{zweib} gives 
\[
 |c_{i,i}| \leq \frac{1}{|a_{i,i}| -  \tau_{i,p(i)} |a_{i,p(i)}| -  \sum_{s \in S_i} \tilde \omega_{s,i} |a_{i,s}|}.
\]
Since all $ \tau_{i,p(i)}$ and $\tilde \omega_{s,i} $ are less than one, the bound of Theorem \ref{zweib} is sharper than Ostrowski's bound. Thus one can use this new bound (for the  diagonal entries) to get better  bounds for the off-diagonal entries, see Theorems \ref{theo:det1} and \ref{theo:det2}. But this then leads  to  smaller    
$\tau_{i,p(i)}$ and $\tilde \omega_{s,i}$, i.e.  better bounds for the off-diagonal entries. These new   $\tau_{i,p(i)}$ and $\tilde \omega_{s,i} $  can be used to get sharper bounds for the diagonal entries, Theorem \ref{zweib}. Hence, one can iteratively improve our  bounds as it was introduced for tridiagonal matrices in \cite{Nab99a}. It was observed in \cite{Nab99a} that this refinement improves the bounds significantly, if $A$ is a strictly diagonally dominant $M$-matrix, i.e. a matrix whose diagonal entries are positive and the off-diagonal entries are nonpositive. A similar refinement of bounds is given in \cite{EchLN18} 
for block diagonally dominant  matrices.


Using  (\ref{eq:prod}) and our upper bounds we can identify the   decay in the inverses of acyclic matrices.   

\begin{theorem}\label{theo-main6}
 Let $A = [a_{i,j}] \in \bRn$ be  strictly row and column diagonally dominant  acyclic and irreducible matrix  with   
$G(A) = \Gamma$ and $A^{-1} = C = [c_{i,j}]$.
For each pair $i,j \in \{1, \ldots , n\}$  let 
$P_{i,j} =\{(k_1,k_2), (k_2, k_3), \ldots (k_{r-1},k_r)\}$, where $k_1 = i$ and
$k_r = j$, be the unique path from $i$ to $j$  with $i \neq j$. Then the sequence  $|c_{i,l}|$ where $l$ follows the path from vertex $i$ to vertex $j$ in $G(A)$,  strictly decays. Similarly, the sequence  
 $|c_{l,i}|$ strictly decays. 
 With 
\beq \label{epsi-ij}
\epsilon_{ij} : = \max_{s \in \tilde P_{i,j}} \gamma_{s,s+1},
\eeq
where the $ \gamma_{s,s+1}$ are given as in $(\ref{gamma-ij})$ the $|c_{i,j}|$ can be bounded by
\beq \label{eq:expo}
  |c_{i,j}| \leq  |c_{i,i}| \epsilon_{ij}^{r-1} = |c_{i,i}| \epsilon_{ij}^{dist(i,j)}.
\eeq

\end{theorem}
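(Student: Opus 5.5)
The plan is to combine the multiplicative path identity (\ref{eq:prod}) with the one-step ratio bounds of Theorem \ref{theo:det1} and Theorem \ref{theo:det2}. Fix $i \neq j$ and the path $P_{i,j}$ with vertices $k_1=i, k_2, \ldots, k_r=j$. By Proposition \ref{Nab01}, strict diagonal dominance gives $c_{k,l}\neq 0$ for all $k,l$, so every quotient in (\ref{eq:prod}) is well defined and nonzero. Taking absolute values in (\ref{eq:prod}) yields, for each $t=1,\ldots,r-1$,
\[
|c_{i,k_{t+1}}| = |c_{i,k_t}|\,\frac{|c_{k_t,k_{t+1}}|}{|c_{k_t,k_t}|}.
\]
Strict decay of $|c_{i,l}|$ along the path is therefore equivalent to showing that each one-step ratio $|c_{k_t,k_{t+1}}|/|c_{k_t,k_t}|$ is strictly less than one.

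For this one-step bound I split into two cases, according to whether $k_{t+1}$ is the predecessor or a successor of $k_t$ in the dfs numbering. If $k_{t+1}=s(k_t)$, then (\ref{det-streich17bcd}) bounds the ratio by $\omega_{k_t,k_{t+1}}$. If $k_{t+1}=p(k_t)$, the ratio is $|c_{k_t,p(k_t)}|/|c_{k_t,k_t}|$, which is handled by (\ref{det-streich19ad}) in Theorem \ref{theo:det2}. Alternatively, the ratio $|c_{p(i),i}|/|c_{i,i}|$ can be rewritten via the treeangle relation so that (\ref{det-streich17ad}) applies. This is exactly the bookkeeping encoded in $\gamma_{t,l}$ of (\ref{gamma-ij}). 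The paper's remark that all $\rho,\tau,\sigma,\omega$ (and their tilde versions) are $<1$ under strict row and column diagonal dominance then gives strict decay.

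\textbf{Main obstacle.} The hardest step is verifying honestly that each $\gamma<1$. For $\omega_{k_t,k_{t+1}}$ this follows from column dominance of column $s(k_t)$: $|a_{k_t,s}|+\sum_{l\neq k_t}|a_{l,s}|<|a_{s,s}|$. For the predecessor case I would not trust the denominator as printed. Instead I would go back to the proof of Theorem \ref{theo:det1}, where the ratio equals $|a_{p(i),i}|\,|(\widetilde A^{(1,i-1)})^{-1}_{i-1,i-1}|$. Ostrowski's bound (\ref{osab}) applied to the last row of the strictly dominant submatrix $\widetilde A^{(1,i-1)}$ gives at most $|a_{p(i),i}|/(|a_{p(i),p(i)}|-\sum_{l\neq i,\,l\neq p(i)}|a_{p(i),l}|)$. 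Row dominance of row $p(i)$ makes this strictly less than one.

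The statement for the column sequence $|c_{l,i}|$ follows by applying the same argument to $A^T$, which is again acyclic, irreducible and strictly row and column dominant, with $(A^T)^{-1}=C^T$. Finally, for (\ref{eq:expo}), I bound each factor in (\ref{eq:path}) by $\gamma_{k_s,k_{s+1}}\le\epsilon_{ij}$, as in Theorem \ref{theo-bounds-path}. This gives $|c_{i,j}|\le|c_{i,i}|\prod_{s=1}^{r-1}\gamma_{k_s,k_{s+1}}\le|c_{i,i}|\epsilon_{ij}^{r-1}$, and $r-1=dist(i,j)$ by definition of the path length.
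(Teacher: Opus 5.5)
Your route is the paper's. You use (\ref{eq:prod}) to reduce the decay to the one-step factors $|c_{k_t,k_{t+1}}|/|c_{k_t,k_t}|$, show each factor is below one, get the column statement from $A^T$, and obtain (\ref{eq:expo}) by bounding every factor by the largest. You are also right that at a predecessor step the factor is the row ratio $|c_{k_t,p(k_t)}|/|c_{k_t,k_t}|$ of (\ref{det-streich19ad}); the paper's proof cites only Theorem \ref{theo:det1}.

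That predecessor step, however, is not secured as written. Your ``honest'' check bounds the other ratio, $|c_{p(i),i}|/|c_{i,i}|$, using row dominance of row $p(i)$, while your factor is governed by column $p(k_t)$. The treeangle property cannot convert one ratio into the other, because for adjacent vertices it is vacuous. In the strictly row and column dominant example on the tree of Figure 2, $|c_{2,1}|/|c_{2,2}|\approx 0.0971$ while $|c_{1,2}|/|c_{2,2}|\approx 0.4855$.

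Moreover, the identity you take from the proof of Theorem \ref{theo:det1} is not valid in general. Deleting vertex $i$ separates $\{1,\ldots,i-1\}$ from $\{i+1,\ldots,n\}$ only if the branch at $i$ is $\{i,\ldots,n\}$, so the factorizations of Lemma \ref{lem.det1} break down. In the same example, for $i=2$, the identity gives $5/11\approx 0.4545$ instead of $0.4855$. Likewise (\ref{det-streich19ad}), even with its sign typo repaired, gives $1/11<0.0971$, so citing it does not close the step either.

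A uniform argument repairs this and makes the predecessor/successor case split unnecessary. For adjacent $k,m$, let $W$ be the vertex set of the component containing $m$ after deleting the edge $\{k,m\}$ from $\Gamma$. The only edge leaving $W$ is $\{k,m\}$, so the entries of $CA=I$ in row $k$ and columns $w\in W$ read $c_{k,W}A_{W,W}=-c_{k,k}\,a_{k,m}\,e_m^T$. Hence
\[
\frac{|c_{k,m}|}{|c_{k,k}|}=|a_{k,m}|\,\bigl|(A_{W,W}^{-1})_{m,m}\bigr|\le\frac{|a_{k,m}|}{|a_{m,m}|-\sum_{l\neq k,m}|a_{l,m}|}<1 ,
\]
by the column version of Lemma \ref{wc} and strict column dominance of column $m$ (all neighbours of $m$ other than $k$ lie in $W$).

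Your successor bound is exactly this, and your relaxed predecessor bound is the same statement for $A^T$, i.e. for the column sequence. So the row sequence uses only column dominance and the column sequence only row dominance.

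Finally, the $\gamma$ of (\ref{gamma-ij}) does not literally bound the factors in (\ref{eq:path)}: for $t<l$ it is $\tau_{t,l}$, a bound on $|c_{t,l}|/|c_{l,l}|$. Read $\epsilon_{ij}$ as the maximum of the one-step bounds above along the path; with that, the rest of your argument goes through.
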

\begin{proof}
 With (\ref{eq:prod})  we have for every pair of indices $k_t$ and $k_{t+1}$ in $P_{i,j}$
  \beqo 
|c_{i,k_{t+1}}| = |c_{i,i} |\prod_{s=1}^{t} \frac{|c_{k_s,k_{s+1}}|}{|c_{k_s,k_s}|} = |c_{i,k_t}| \frac{|c_{k_t,k_{t+1}}|}{|c_{k_{t}, k_{t}}|}.
\eeqo
But 
\beqo
\frac{|c_{k_t,k_{t+1}}|}{|c_{k_{t}, k_{t}}|}
\eeqo
 can be bounded by Theorem \ref{theo:det1}. Since $A$ is strictly row and column  diagonally dominant,  the bounds are all  less  than one. Equation (\ref{eq:expo}) follows immediately. 
\end{proof}

Thus Theorem \ref{theo-main6} gives a decay along a path in the tree. We do not have a decay strictly away from the diagonal entries. This decay for inverses of acyclic matrices is somehow hidden. 
We need  to have both row and column diagonal dominance to obtain decay along every path. 
Note in (\ref{det-streich17a}) the denominator is just the column sum of the $p(i)$-th row of $A^{(1,p(i))}$. But in (\ref{det-streich17bc}) this is the row sum of the $s(i)$-th column of $A^{(s(i),n)}$.
We need  to do so,  since we want to couple the numerators and denominators.  But here  one can see, that  the bounds and the decay only holds  for {\bf all} paths, if we assume  that  $A$ is row and column diagonally dominant, see Section \ref{sec:examples}. If the diagonal dominance gets larger than the decay becomes  stronger.

Next we will  generalizes another decay result  for  tridiagonal matrices.
We need  the following proposition.

\begin{proposition} \label{propo-one}
Let $A \in \bRn $ be symmetric and of  tree  structure. Then
\[
 a_{1,2} = a_{1,3} = \ldots = a_{1,n} = a_{2,1} = \ldots = a_{n,1}. 
\]
If the weights of the underlying tree of $A$ are non-negative, then the entries $a_{i,l}$, where $l$ follows the path  between $i$ and $j$ decay. 
\end{proposition}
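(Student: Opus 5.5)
The plan is to work directly from the definition (\ref{dis}) of a matrix of tree structure with respect to the rooted tree $\Gamma_{(0)}$. Recall that $\Gamma_{(0)}$ is obtained from $\Gamma$ by adding the root $0$ and the single edge $e_{0,1}$. So every path $P_{i,0}$ ends with the edge $\{1,0\}$, and $P_{1,0}$ consists of that edge alone.

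\emph{First claim.} For any $j \neq 1$ we get
\[
P_{1,0}\cap P_{j,0}=\{\{1,0\}\},
\]
because $P_{1,0}$ has no other edge. Hence $a_{1,j} = w_{0,1}$, and by symmetry $a_{j,1} = w_{0,1}$ as well. This gives the whole chain of equalities in the statement. (Also $a_{1,1} = w_{0,1}$, although the statement does not need it.)

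\emph{Second claim.} Fix $i$ and let $l$ run along $P_{i,j} = ((k_1,k_2),\ldots,(k_{r-1},k_r))$. I would show that for consecutive vertices
\[
P_{i,0}\cap P_{k_{t+1},0}\subseteq P_{i,0}\cap P_{k_t,0}.
\]
By (\ref{dis}) the difference $a_{i,k_t}-a_{i,k_{t+1}}$ is then a sum of weights of the edges removed. With non-negative weights this is $\ge 0$, so the sequence is non-increasing, which is the intended meaning of ``decay''.

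To prove the inclusion, I would use the following characterisation: $P_{i,0}\cap P_{l,0}$ is exactly the path from the nearest common ancestor $m(i,l)$ of $i$ and $l$ down to the root. Walking from $i$ to $j$, the path first climbs from $i$ to $m=m(i,j)$ through predecessors, then descends from $m$ to $j$ through successors.

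\textbf{Ascending part.} Here $k_t$ is an ancestor of $i$, so $m(i,k_t)=k_t$ and the intersection is $P_{k_t,0}$. Moving to $k_{t+1}=p(k_t)$ removes the edge $\{k_t,p(k_t)\}$ from this set.

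\textbf{Descending part.} Here $m(i,k_t)=m$ stays fixed, so $a_{i,l}$ is constant.

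So the sequence is non-increasing throughout. It strictly decreases at each step where a removed edge has positive weight, and it is constant past the common ancestor. The main obstacle is this descending part: equality can occur there, so I would state the result as monotone (non-strict) decay and remark that strictness holds on the ascending segment when the weights are positive. The proof also has to handle $l=i$ at the start, where $a_{i,i}$ is a sum over all of $P_{i,0}$; this fits the same inclusion argument.
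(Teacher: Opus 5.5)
Your proof is correct and follows the same route as the paper, whose proof is the single line that the statement follows from the definition (\ref{dis}). You make that explicit in two ways: the root $0$ is attached only to vertex $1$, so $P_{1,0}\cap P_{j,0}=\{\{0,1\}\}$; and $P_{i,0}\cap P_{l,0}$ is the path from the nearest common ancestor of $i$ and $l$ to the root. Your remark that the decay is only non-increasing is accurate, and it is the right reading of ``decay'' here: the entries are strict only on the ascending segment where the weights are positive, and constant past the common ancestor.
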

\begin{proof}
 The statement follows  from the definition of a matrix of tree structure.
\end{proof}

We  than have 
\vspace{0.3cm}


\begin{theorem}
 Let $A \in \bRn$ be non-singular, irreducible and acyclic. If $A$ is strictly row diagonally dominant then the sequence of the $|d_i|$ in 
 $(\ref{def-neu})$ is decreasing. If $A$ is strictly column diagonally dominant  then  the sequence of the $|f_i|$ in 
 $(\ref{def-neu})$ is decreasing.
 \end{theorem}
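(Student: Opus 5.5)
Throughout, I read ``decreasing'' in the sense of the hidden decay of Theorem \ref{theo-main6}: along every path in $\Gamma$ that starts at vertex $1$ and moves away from it, the moduli strictly decrease. Equivalently, $|d_j|<|d_{p(j)}|$ for every $j>1$. A strict decrease in the dfs numbering cannot hold in general, since that numbering jumps between branches. By the remark after Theorem \ref{main-structure}, $d_j=(A^{-1}e_1)_j=c_{j,1}$ and $f_j=c_{1,j}$. So the claim concerns the first column of $C$ (for $d$) and the first row of $C$ (for $f$), and the $f$-statement follows from the $d$-statement applied to $A^T$. Indeed, $A^T$ is acyclic and irreducible with the same tree and the same numbering, it is strictly row diagonally dominant exactly when $A$ is strictly column diagonally dominant, and the first column of $(A^T)^{-1}$ is the first row of $A^{-1}$.

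For the $d$-statement, let $d=Ce_1$, so that $Ad=e_1$. Fix $j>1$ and let $B$ be the branch starting at $j$. It contains $j$ and all of $j$'s descendants, but not $1$ and not $p(j)$. Because $G(A)$ is a tree, the only edge joining $B$ to its complement is $\{j,p(j)\}$. Hence the rows of $Ad=e_1$ indexed by $B$ (none of them is row $1$) read
\[
A_B\, d_B + a_{j,p(j)}\, d_{p(j)}\, e_j = 0,
\]
where $A_B$ is the principal submatrix of $A$ on $B$ and $e_j$ is the unit vector of $B$ at position $j$. A principal submatrix of a strictly row diagonally dominant matrix is again strictly row diagonally dominant, so $A_B$ is non-singular by Lemma \ref{wc}. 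Therefore
\[
d_j = -\,a_{j,p(j)}\,(A_B^{-1})_{j,j}\; d_{p(j)}.
\]

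Next I bound the factor $|a_{j,p(j)}|\,|(A_B^{-1})_{j,j}|$ using Lemma \ref{wc} for $A_B$. The off-diagonal entries of row $j$ of $A_B$ are exactly the $a_{j,s}$ with $s$ a successor of $j$. Applying (\ref{osab}) gives
\[
|(A_B^{-1})_{j,j}| \le \frac{1}{|a_{j,j}|-\sum_{s\in S_j}|a_{j,s}|}.
\]
Strict row dominance of $A$ in row $j$, where the neighbours of $j$ are $p(j)$ and the $s\in S_j$, gives $|a_{j,p(j)}| < |a_{j,j}|-\sum_{s\in S_j}|a_{j,s}|$. Hence $|d_j|/|d_{p(j)}|<1$ whenever $d_{p(j)}\neq0$. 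By Proposition \ref{Nab01} all entries of $C$ are nonzero, so the inequality is strict and meaningful. Iterating along any path from $1$ outward yields the strict decrease.

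The step needing the most care is the block decoupling: checking that row $j$ of $A$ couples $B$ to the rest only through $p(j)$, and that no other row of $B$ couples to the complement of $B$. This is exactly where the tree structure and the dfs/branch labelling are used. A route via the treeangle relation (\ref{eq:prod}) and Theorem \ref{theo:det2} would instead require both row and column dominance, so the branch argument is preferable because it needs only row dominance, as the statement demands.
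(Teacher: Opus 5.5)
Your proof is correct, and it takes a more direct route than the paper's.

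The paper argues in one line. Because the first row and column of $T$ and of $U$ are constant (Proposition \ref{propo-one}), the first column (row) of $C=T\circ U\circ R$ is a constant multiple of $(d_i)$ (resp.\ $(f_i)$). The decay is then read off from Theorem \ref{theo:det1}, that is, from the treeangle relation (\ref{eq:prod}), which gives $c_{j,1}/c_{p(j),1}=c_{j,p(j)}/c_{p(j),p(j)}$. The ratio on the right is bounded using Klein's theorem, the determinant factorizations of Lemma \ref{lem.det1}, and Ostrowski's Lemma \ref{wc} applied to a permuted submatrix.

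You instead restrict $Ad=e_1$ to the branch $B$ at $j$ and obtain $d_j=-a_{j,p(j)}(A_B^{-1})_{j,j}\,d_{p(j)}$ directly. The same computation applied to column $p(j)$ of $C$ shows that this factor equals $c_{j,p(j)}/c_{p(j),p(j)}$. So you bound exactly the paper's ratio, but without the treeangle property, determinants, or the Hadamard factorization.

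What your version gains:
\begin{itemize}
\item It is self-contained.
\item It visibly uses only strict row dominance in row $j$, matching the one-sided hypothesis of the statement. Theorems \ref{theo:det1} and \ref{theo:det2}, which the paper cites, are stated under two-sided dominance.
\item It pins down the meaning of ``decreasing'': strictly, along paths leaving vertex $1$. The paper leaves this implicit. The paper's own second example forces this reading, since there the first column $10.1245, 0.8547, 0.1315, 2.3652,\ldots$ is not monotone in the index.
\end{itemize}
The paper's route, in exchange, shows the result as a special case of its general path bounds.

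One small correction to your closing remark: the treeangle route does not intrinsically need both dominances. The factor it uses is $\widetilde\omega_{j,p(j)}$ in (\ref{det-streich19bcd}), whose denominator should run over the successors of $s(i)$. Read that way, it is exactly your factor $|a_{j,p(j)}|/(|a_{j,j}|-\sum_{s\in S_j}|a_{j,s}|)$, which requires only row dominance. The two-sided assumption comes from how those theorems are stated, not from the method.
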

\begin{proof}
 The statements follow  from Theorem \ref{theo:det1}, since in (\ref{def-neu}) the entries of first row and column of $T$ are all the same. The same holds for  $U$, see Proposition   \ref{propo-one}.
\end{proof}

 The above Theorem  generalizes  Theorem 3.2  in \cite{Nab99b}. Here  the $f_i$ are  just  the $v_i$ and the $d_i$ are  just  the $y_i$.

\section{Examples} \label{sec:examples}

 The  next examples illustrate the decay. We  start with a tridiagonal matrix, a class of matrices for which the decay is well understood. But here  we consider  this matrix as a special  acyclic matrix. 
 
 \beqo
 A = \frac{1}{100}\left[ \begin{array}{cccccc}
 11 &    -1 &     0 &     0 &     0 &     0 \\
-1 &    12 &    -2 &     0 &     0 &     0 \\
     0 &    -2 &    13 &    -3 &     0 &     0  \\
     0  &    0 &    -3 &    14 &    -4  &    0 \\
     0  &    0 &     0 &    -4 &    15  &   -5  \\
     0 &     0 &     0  &    0 &    -5 &    16
\end{array} \right].
  \eeqo
 The graph of $A$  is just a line. 
 
  \begin{center}
 {\rm 
\noindent
\unitlength0.8cm
\begin{picture}(8,4)
 \put(2,2){\line(1,0){2.0}}
 \put(4,2){\circle*{0.2}}
 \put(2,2){\circle*{0.2}}
\put(0,2){\circle*{0.2}}
\put(10,2){\circle*{0.2}}
\put(6,2){\circle*{0.2}}
\put(8,2){\circle*{0.2}}
\put(0,2){\line(1,0){2.0}}
\put(4,2){\line(1,0){2.0}}
\put(6,2){\line(1,0){2.0}}
\put(8,2){\line(1,0){2.0}}
\put(-0.1,1.1){\makebox(0,0)[lb]{{\bf 1}}}
\put(1.9,1.1){\makebox(0,0)[lb]{{\bf 2}}}
\put(3.9,1.1){\makebox(0,0)[lb]{{\bf 3}}}
\put(5.9,1.1){\makebox(0,0)[lb]{{\bf 4}}}
\put(7.9,1.1){\makebox(0,0)[lb]{{\bf 5}}}
\put(9.9,1.1){\makebox(0,0)[lb]{{\bf 6}}}
 \end{picture}}
\end{center}
\vspace{0.5cm}
\begin{center}
{\sc Figure 3} 
\end{center}

 The inverse of $A$ is given by
 \beqo
 C = A^{-1} = \left[ \begin{array}{cccccc}
   9.1623 &    0.7848 &    0.1276 &    0.0299 &    0.0089     & 0.0028\\
    0.7848  &   8.6327 &    1.4040 &    0.3288 &    0.0979    & 0.0306  \\
    0.1276  &   1.4040 &    8.3602 &    1.9580 &    0.5828 &     0.1821 \\
    0.0299  &   0.3288 &    1.9580 &    8.2654 &    2.4604 &     0.7689  \\
    0.0089  &   0.0979 &    0.5828 &    2.4604 &    8.1743 &     2.5545  \\
    0.0028  &   0.0306 &    0.1821  &   0.7689 &    2.5545 &    7.0483
\end{array} \right]. 
  \eeqo

We can observe a clear decay along a row away  from the diagonal. But  this means for every vertex $i \in \{1, \ldots ,6\}$ we can  follow the  path to vertex $1$ or vertex $6$  and see the decay in the matrix along this path. Thus, for example 
consider vertex $4$.
 We have the path
 \beqo
 4 \rightarrow 3 \rightarrow 2 \rightarrow 1. 
 \eeqo
 Thus   we have  a decay of the entries
 \beqo
 C_{4,4}\geq C_{4,3} \geq C_{4,2} \geq C_{4,1},
 \eeqo
 namely
 \beqo
  8.2654 > 1.9580 > 0.3288 >  0.0299. 
 \eeqo
 But  we have also the path 
  \beqo
 4 \rightarrow 5 \rightarrow 6. 
 \eeqo
 Thus   we have  a decay of the entries
 \beqo
 C_{4,4} \geq C_{4,5} \geq C_{4,6},
 \eeqo
 namely
  \beqo
  8.2654 >   2.4604 > 0.7689. 
  \eeqo

 \vspace{0.5cm}
 
 Now, consider a matrix whose  graph  is the tree as in Figure 2.
 
 {\rm 
\noindent
\unitlength0.8cm
\begin{picture}(12,5)
\put(5.5,0){\circle*{0.2}}
\put(7.5,0){\circle*{0.2}}
\put(3.5,0){\circle*{0.2}}
\put(4.5,2){\circle*{0.2}}
\put(6.5,2){\circle*{0.2}}
\put(5.5,4){\circle*{0.2}}
\put(5.8,0){\makebox(0,0)[lb]{{\bf 5}}}
\put(3.8,0){\makebox(0,0)[lb]{{\bf 3}}}
\put(4.7,2){\makebox(0,0)[lb]{{\bf 2}}}
\put(7.8,0){\makebox(0,0)[lb]{{\bf 6}}}
\put(6.8,2){\makebox(0,0)[lb]{{\bf 4}}}
\put(5.8,4){\makebox(0,0)[lb]{{\bf 1}}}
\put(5.5,0){\line(1,2){1}}
\put(7.5,0){\line(-1,2){1}}
\put(3.5,0){\line(1,2){1}}
\put(4.5,2){\line(1,2){1}}
\put(6.5,2){\line(-1,2){1}}
\end{picture}
\vspace{.05in}

\begin{center}
{\sc Figure 2} 
\end{center}
}

We  choose
 \beqo
 A = \frac{1}{100} \left[ \begin{array}{cccccc}
 11 &    -5  &    0 &    -3 &     0 &     0  \\
    -1 &    12 &    -1 &     0 &     0 &     0  \\
     0 &    -2 &    13  &    0 &     0 &     0  \\
    -3 &     0  &    0 &   14 &    -2 &    -2  \\
     0 &    0  &    0 &    -4 &    15 &     0 \\
     0 &     0  &    0  &   -5 &     0 &    16
\end{array} \right].
  \eeqo
 
 The inverse is given by 
 
 \beqo
 C = A^{-1} = \left[ \begin{array}{cccccc}
  10.1245 &   4.2733 &   0.3287 &   2.3652 &   0.3154 &   0.2957 \\
    0.8547 &   8.8023 &   0.6771 &   0.1997 &   0.0266 &   0.0250 \\
    0.1315 &   1.3542 &   7.7965 &   0.0307 &   0.0041 &   0.0038 \\
    2.3652 &   0.9983 &   0.0768 &   8.3397 &   1.1120  &  1.0425 \\
    0.6307 &    0.2662 &    0.0205 &   2.2239 &   6.9632 &   0.2780 \\
    0.7391 &    0.3120 &    0.0240 &    2.6062 &    0.3475 &    6.5758 
 \end{array} \right].
  \eeqo

 Obviously, there is no decay away  from the diagonal entries. But Theorem \ref{theo-main6}   gives the hidden decay. 
 Now, have a look  at that  decay. Consider vertex $6$. 
 We have the path
 \beqo
 6 \rightarrow 4 \rightarrow 1 \rightarrow 2 \rightarrow 3. 
 \eeqo
 Thus,   we have  a decay of the entries
 \beqo
 C_{6,6} \geq C_{6,4} \geq C_{6,1}  \geq C_{6,2} \geq C_{6,3},
 \eeqo
 namely
 \beqo
  6.5758 > 2.6062 >  0.7391 > 0.3120 > 0.0240.
 \eeqo

Or  consider vertex $2$. 
 We have the path
 \beqo
 2 \rightarrow 1 \rightarrow 4 \rightarrow 5. 
 \eeqo
 Thus,   we have  a decay of the entries
 \beqo
  C_{2,2} \geq C_{2,1} \geq C_{2,4} \geq C_{2,5}
 \eeqo
 namely
 \beqo
  8.8023 > 0.8547 > 0.1997 > 0.0266.
 \eeqo

 Next, we consider a matrix that  is strictly diagonally dominant by rows but not by columns. The graph is the same  as above.

\beqo
 A = \frac{1}{100} \left[ \begin{array}{cccccc}
 11 &    -5  &    0 &    -3 &     0 &     0  \\
    -1 &   4 &    -2 &     0 &     0 &     0  \\
     0 &    -2 &    13  &    0 &     0 &     0  \\
    -3 &     0  &    0 &   14 &    -2 &    -2  \\
     0 &    0  &    0 &    -4 &    15 &     0 \\
     0 &     0  &    0  &   -5 &     0 &    16
\end{array} \right].
  \eeqo
The inverse is given by 
\beqo
 C = A^{-1} = \left[ \begin{array}{cccccc}
 11.2939 &   15.2938 &    2.3529 &    2.9797 &    0.7946  &   0.9312 \\
    3.0588 &   31.2254 &    4.8039 &    0.8070 &    0.2152 &    0.2522 \\
    0.4706 &   4.8039 &   8.4314 &   0.1242 &   0.0331 &   0.0388 \\
    2.9797 &    4.0350 &   0.6208 &   9.5806 &   2.5548 &   2.9939 \\
    0.7946 &    1.0760 &   0.1655 &   2.5548 &   7.3480 &   0.7984 \\
    0.9312 &   1.2609 &   0.1940 &   2.9939 &   0.7984 &   7.1856 
 \end{array} \right].
  \eeqo
Consider again the path 
 \beqo
 6 \rightarrow 4 \rightarrow 1 \rightarrow 2 \rightarrow 3. 
 \eeqo
 But we just have 
 \beqo
 C_{6,6} \geq C_{6,4} \geq C_{6,1}  \leq C_{6,2} \geq C_{6,3},
 \eeqo
 namely
 \beqo
  7.1856 > 2.9939 > 0.9312 < 1.2609 > 0.1940.
 \eeqo 
  This can happen, since we do not have row diagonal dominance in row 2. 
  Note that we have  a decay of the entries
 \beqo
  C_{2,2} \geq C_{2,1} \geq C_{2,4} \geq C_{2,5}.
 \eeqo
 Thus, it is the missing row diagonally dominance in row 2 and not the 'ups' and 'downs'  in the graph of $A$ which destroys the decay.

 \begin{center}
  {\bf Acknowledgment}
   \end{center}
  The author would like to thank the referees for reading the  paper with great care and for their suggestions which helped to improve the paper. 

\bibliographystyle{abbrv}
\bibliography{acyclic.bib}

\end{document}